\documentclass[a4paper,
fontsize=11pt,%
oneside,%
numbers=enddot]{scrartcl}
\KOMAoptions{DIV=12}    
\tolerance  720
\usepackage[utf8]{inputenc}
\usepackage[T1]{fontenc}
\usepackage{textcomp}
\usepackage[fleqn]{amsmath}
\usepackage{amsthm}
\usepackage{amssymb}
\usepackage{amsfonts}
\usepackage{libertine}
\usepackage[libertine,
slantedGreek,
nosymbolsc,
nonewtxmathopt,
subscriptcorrection]{newtxmath}
\usepackage[scaled=0.95,varqu,varl]{inconsolata}
\frenchspacing
\usepackage[scr=boondox]{mathalpha}   
\usepackage{euscript}   
\usepackage{leftindex}
\usepackage{subcaption}
\allowdisplaybreaks[1]  
\numberwithin{equation}{section}    
\usepackage[svgnames,hyperref]{xcolor}
\definecolor{dblue}{HTML}{0455BF}
\definecolor{dgreen}{HTML}{02724A}
\definecolor{dgreen2}{HTML}{025951}
\definecolor{dred}{HTML}{D90404}
\definecolor{dviolet}{HTML}{42208C}
\definecolor{labelkey}{HTML}{025951}
\definecolor{refkey}{HTML}{025951}
\addtokomafont{section}{\centering}

\usepackage{enumitem}
\setlist{itemsep=-2.0pt}

\usepackage{upref}  
\usepackage{hyperref}
\hypersetup{colorlinks=true,
linktocpage=true,
linkcolor=dblue,
citecolor=dgreen,
urlcolor=dred,
pdfencoding=auto,
hypertexnames=false}
\makeatletter
\g@addto@macro\th@plain{
\thm@headfont{\bfseries\sffamily}
\thm@notefont{}}
\g@addto@macro\th@definition{
\thm@headfont{\bfseries\sffamily}
\thm@notefont{}}
\g@addto@macro\th@remark{
\thm@headfont{\bfseries\sffamily}
\thm@notefont{}}
\makeatother
\theoremstyle{plain}
\newtheorem{theorem}{Theorem}[section]
\newtheorem{proposition}[theorem]{Proposition}

\newtheorem{lemma}[theorem]{Lemma}

\theoremstyle{definition}
\newtheorem{definition}[theorem]{Definition}
\newtheorem{example}[theorem]{Example}

\theoremstyle{remark}
\newtheorem{remark}[theorem]{Remark}
\newtheorem{notation}[theorem]{Notation}

\usepackage[extdef=true]{delimset}
\DeclareMathDelimiterSet{\scal}[2]{
\selectdelim[l]<{#1}
\mathpunct{}\selectdelim[p]|
{#2}\selectdelim[r]>}
\renewcommand{\pair}{\delimpair<{[.],}>}
\newcommand{\menge}[2]{\bigl\{{#1}\mid{#2}\bigr\}} 
\DeclareMathDelimiterSet{\Menge}[2]{\selectdelim[l]\{
{#1}\selectdelim[m]|{#2}\selectdelim[r]\}}

\makeatletter
\def\upintkern@{\mkern-7mu\mathchoice{\mkern-3.5mu}{}{}{}}
\def\upintdots@{\mathchoice{\mkern-4mu\@cdots\mkern-4mu}%
{{\cdotp}\mkern1.5mu{\cdotp}\mkern1.5mu{\cdotp}}%
{{\cdotp}\mkern1mu{\cdotp}\mkern1mu{\cdotp}}%
{{\cdotp}\mkern1mu{\cdotp}\mkern1mu{\cdotp}}}
\makeatother
\DeclareFontFamily{OMX}{mdbch}{}
\DeclareFontShape{OMX}{mdbch}{m}{n}{ <->s * [0.8]  mdbchr7v }{}
\DeclareFontShape{OMX}{mdbch}{b}{n}{ <->s * [0.8]  mdbchb7v }{}
\DeclareFontShape{OMX}{mdbch}{bx}{n}{<->ssub * mdbch/b/n}{}
\DeclareSymbolFont{uplargesymbols}{OMX}{mdbch}{m}{n}
\SetSymbolFont{uplargesymbols}{bold}{OMX}{mdbch}{b}{n}
\DeclareMathSymbol{\upintop}{\mathop}{uplargesymbols}{82}
\DeclareMathSymbol{\upointop}{\mathop}{uplargesymbols}{"48}
\makeatletter
\renewcommand{\int}{\DOTSI\upintop\ilimits@}
\renewcommand{\oint}{\DOTSI\upointop\ilimits@}
\makeatother

\newcommand{\RR}{\mathbb{R}}

\newcommand{\DD}{\mathcal{D}}
\newcommand{\XX}{\mathcal{X}}
\newcommand{\YY}{\mathcal{Y}}
\newcommand{\EEE}{\ensuremath{\boldsymbol{\mathcal{E}}}}

\newcommand{\DDD}{\ensuremath{\boldsymbol{\mathcal{D}}}}

\newcommand{\pinf}{{+}\infty}
\newcommand{\minf}{{-}\infty}
\newcommand{\zeroun}{\intv[o]{0}{1}}

\newcommand{\RXX}{\intv{\minf}{\pinf}}
\newcommand{\RX}{\intv[l]0{\minf}{\pinf}}
\newcommand{\RP}{\intv[r]0{0}{\pinf}}
\newcommand{\RPP}{\intv[o]0{0}{\pinf}}
\newcommand{\RPX}{\intv0{0}{\pinf}}

\newcommand{\RMM}{\intv[o]0{\minf}{0}}

\newcommand{\emp}{\varnothing}


\newcommand{\minimize}[2]{\underset{\substack{{#1}}}
{\operatorname{minimize}}\;\;#2}

\newcommand{\pushfwd}%
{\ensuremath{\mbox{\Large$\,\triangleright\,$}}}

\DeclareMathOperator{\Argmin}{Argmin}

\newcommand{\Id}{\mathrm{Id}}

\DeclareMathOperator{\ran}{ran}

\DeclareMathOperator{\dom}{dom}
\DeclareMathOperator{\intdom}{int\,dom}

\DeclareMathOperator{\gra}{gra}
\DeclareMathOperator{\zer}{zer}
\DeclareMathOperator{\inte}{int}

\DeclareMathOperator{\Prox}{Prox}
\DeclareMathOperator{\prox}{prox}
\DeclareMathOperator{\proj}{proj}


\renewcommand{\leq}{\leqslant}
\renewcommand{\geq}{\geqslant}
\newcommand{\exi}{\exists\,}

\renewenvironment{abstract}{%
\vspace*{-0.50cm}
\small
\quotation%
\noindent%
{\normalfont\bfseries\sffamily
\nobreak\abstractname\ }%
}{%
\endquotation%
\medskip
}
\renewcommand{\abstractname}{Abstract.}
\newcommand\keywordsname{Keywords.}
\newenvironment{keywords}
{\renewcommand\abstractname{\keywordsname}\begin{abstract}}
{\end{abstract}}
\usepackage[auth-sc]{authblk}
\newcommand{\email}[1]{\href{mailto:#1}{\nolinkurl{#1}}}
\renewcommand*\Affilfont{\normalfont\normalsize}
\newcommand\affilcr{\protect\\ \protect\Affilfont}
\makeatletter
\renewcommand\AB@affilsepx{\protect\\[0.5em]}
\makeatother

\author[1]{Patrick L. Combettes}
\affil[1]{North Carolina State University
\affilcr
Department of Mathematics
\affilcr
Raleigh, NC 27695, USA
\affilcr
\email{plc@math.ncsu.edu}
}
\author[2]{Julien N. Mayrand}
\affil[2]{North Carolina State University
\affilcr
Department of Mathematics
\affilcr
Raleigh, NC 27695, USA
\affilcr
\email{jnmayran@ncsu.edu}
}

\begin{document}

\title{%
Lower Bounds on the Haraux Function\thanks{%
Contact author: P. L. Combettes.
Email: \email{plc@math.ncsu.edu}.
Phone: +1 919 515 2671.
This work was supported by the National Science Foundation 
under grant DMS-2513409. 
}}

\date{~}

\maketitle

\vspace{12mm}

\begin{abstract}
The Haraux function is an important tool in monotone
operator theory and its applications. One of its salient 
properties for a maximally monotone operator is to be valued in
$[0,+\infty]$ and to vanish only on the graph of the operator.
Sharper lower bounds for this function have been proposed in
specific cases. We derive lower bounds in the general
context of set-valued operators in reflexive real Banach spaces. 
These bounds are new, even for maximally monotone operators acting 
on Euclidean spaces, a scenario in which we show that they can be
better than existing ones. As a by-product, we obtain lower bounds
on the Fenchel--Young function in variational analysis. Several
examples are given and applications to composite monotone
inclusions are discussed.
\end{abstract}

\begin{keywords}
Fenchel--Young inequality,
Fitzpatrick function,
Haraux function,
metric resolvent,
monotone operator,
warped resolvent.
\end{keywords}

\newpage

\section{Introduction}
\label{sec:1}

Throughout, $\XX\neq\{0\}$ is a reflexive real Banach space with
topological dual $\XX^*$ and $\pair{\cdot}{\cdot}$ denotes the
canonical duality pairing. The power set of $\XX^*$ is denoted by
$2^{\XX^*}$. See Section~\ref{sec:2} for further notation.

Let $A\colon\XX\to 2^{\XX^*}$ be an operator and let
$\gra A=\menge{(x,x^*)\in\XX\times\XX^*}{x^*\in Ax}$ be its graph. 
The \emph{Haraux function} associated with $A$ is
\begin{equation} 
\label{e:1}
H_A\colon\XX\times\XX^*\to\RXX\colon(x,u^*)\mapsto
\sup_{(y,y^*)\in\gra A}\pair{x-y}{y^*-u^*}.
\end{equation} 
This function was originally conceived by Haraux in 1974 in
unpublished notes as a tool to generalize the notions of 3 monotone
and angle bounded operators \cite{Hara25}. It is at the core of
many important developments in the theory of monotone operators in
reflexive Banach spaces \cite{Atto18,Livre1,Buim25,Bura02,Chul96,%
Penn01,Rei79b,Simo08,Zei90B}. In particular, it is an essential
component of the famous Br\'ezis--Haraux theorem \cite{Brez76},
which studies how close the range of the sum of two monotone
operators in a Hilbert space is to the Minkowski sum of their
ranges. A closely related function, later
introduced independently in \cite{Fitz88}, is the 
\emph{Fitzpatrick function} 
\begin{equation}
\label{e:10} 
F_A=H_A+\pair{\cdot}{\cdot}. 
\end{equation} 
A basic property of the Haraux function is that, if $A$
is maximally monotone, then 
\begin{equation} 
\label{e:5}
H_A\geq 0\quad\text{and}\quad\gra A
=\menge{(x,u^*)\in\XX\times\XX^*}{H_A(x,u^*)=0}. 
\end{equation} 
This follows for instance from \cite[Corollary~3.9]{Fitz88} and
\eqref{e:10}. A natural question is whether the inequality 
$H_A\geq 0$ can be improved. A related question concerns the
Fenchel--Young inequality. Consider a proper function
$\varphi\colon\XX\to\RX$ with conjugate $\varphi^*$, and define 
the associated \emph{Fenchel--Young function} by
\begin{equation} 
\label{e:2}
L_\varphi\colon\XX\times\XX^*\to\RX\colon(x,u^*)\mapsto
\varphi(x)+\varphi^*(u^*)-\pair{x}{u^*}. 
\end{equation} 
As is well known \cite[Section~10]{More66},
\begin{equation}
\label{e:6}
L_\varphi\geq 0\quad\text{and}\quad\gra\partial\varphi
=\menge{(x,u^*)\in\XX\times\XX^*}{L_\varphi(x,u^*)=0},
\end{equation} 
and one may likewise ask whether
better lower bounds can be found. These two questions are not only
of theoretical interest but they also impact concrete applications
in areas such as inverse problems \cite{Andr25}, evolution
inclusions \cite{Atto18}, optimal transportation
\cite{Car23a,Car23b}, and machine learning \cite{Blon20,Rako24}. 

Fix $(x,u^*)\in\XX\times\XX^*$. Then, in view of \eqref{e:1}, given
$(y,y^*)\in\gra A$, the inequality
$H_A(x,u^*)\geq\pair{x-y}{y^*-u^*}$ furnishes a trivial lower
bound. To make it exploitable, $(y,y^*)$ should depend on $(x,u^*)$
in a suitable way. The first instance of such a lower bound we have
found in the literature is the following.

\begin{proposition}[\protect{\cite[Lemma~2.3]{Peno06}}] 
\label{p:11}
Let $A\colon\XX\to 2^{\XX^*}$ be maximally monotone, let $x\in\XX$,
let $u^*\in\XX^*$, and let $\Delta\colon\XX\to 2^{\XX^*}$ be the 
\emph{duality mapping} of $\XX$, i.e.,
\begin{equation}
\label{e:dm}
(\forall x\in\XX)\;\;\Delta(x)=
\menge{x^*\in\XX^*}{\|x\|^2=\pair{x}{x^*}=\|x^*\|^2}.
\end{equation}
Then there exists $(z,z^*)\in\gra A$ such that
$z^*-u^*\in\Delta(x-z)$ and $H_A(x,u^*)\geq\|x-z\|^2$.
\end{proposition}

If $\XX$ is a Hilbert space (identified with its dual), then
$\Delta=\Id$ and the inclusion $z^*-u^*\in\Delta(x-z)$ in
Proposition~\ref{p:11} becomes $x+u^*-z=z^*\in Az$, i.e.,
$z=(\Id+A)^{-1}(x+u^*)=J_{A}(x+u^*)$ since the \emph{resolvent}
$J_{A}=(\Id+A)^{-1}$ is single-valued by
\cite[Corollary~23.11(i)]{Livre1}. We then deduce at once from
Proposition~\ref{p:11} that 
\begin{equation}
\label{e:0}
H_A(x,u^*)\geq\|x-J_A(x+u^*)\|^2.
\end{equation}
This inequality appears explicitly in \cite[Equation~(1)]{Atto18},
where it is also derived from \cite[Lemma~2.3]{Peno06}. Unaware of
these results, Carlier recently proposed in \cite{Car23a} a
parametrized version of \eqref{e:0} along with a lower bound for
\eqref{e:2}, with elegant applications to convex analysis and
transportation theory (see also \cite{Baus23} for further results
in Hilbert spaces).

\begin{proposition}
\label{p:21}
Suppose that $\XX$ is a Hilbert space, let $x\in\XX$, let
$u^*\in\XX$, and let $\gamma\in\RPP$. Then the following hold:
\begin{enumerate}
\item
\label{p:21i}
{\em {\cite[Section~2]{Car23a}}}
Let $A\colon\XX\to 2^{\XX}$ be maximally monotone. Then
\begin{equation}
H_A(x,u^*)\geq
\dfrac{\bigl\|x-J_{\gamma A}(x+\gamma u^*)\bigr\|^2}{\gamma}.
\end{equation}
\item
\label{p:21ii}
{\em {\cite[Lemma~1.1]{Car23a}}}
Let $\varphi\colon\XX\to\RX$ be a proper lower semicontinuous
convex function and let $\prox_\varphi$ be its 
\emph{proximity operator},
i.e., $\prox_\varphi=J_{\partial\varphi}$. Then
\begin{equation}
L_\varphi(x,u^*)\geq
\dfrac{\bigl\|x-\prox_{\gamma\varphi}(x+\gamma u^*)\bigr\|^2}
{\gamma}.
\end{equation}
\end{enumerate}
\end{proposition}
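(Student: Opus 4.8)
\emph{Proof proposal.} The plan is to exploit the fact that $H_A$ in \eqref{e:1} is defined as a \emph{supremum} over $\gra A$: to bound it from below, it suffices to exhibit a single admissible pair $(y,y^*)\in\gra A$ and evaluate $\pair{x-y}{y^*-u^*}$ at it. The entire argument for \ref{p:21i} then reduces to choosing the right pair, which is supplied by the resolvent itself. Since $A$ is maximally monotone on a Hilbert space, Minty's theorem guarantees that $J_{\gamma A}=(\Id+\gamma A)^{-1}$ is single-valued and defined everywhere, so I may set $p=J_{\gamma A}(x+\gamma u^*)$. Unwinding the definition of the resolvent, $x+\gamma u^*\in p+\gamma Ap$, hence there exists $p^*\in Ap$ with $\gamma p^*=(x+\gamma u^*)-p$, that is,
\begin{equation}
p^*=u^*+\frac{x-p}{\gamma}.
\end{equation}

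The key computation is then immediate. Because $(p,p^*)\in\gra A$, the supremum defining $H_A(x,u^*)$ dominates the value at this pair, and
\begin{equation}
H_A(x,u^*)\geq\pair{x-p}{p^*-u^*}
=\pair{x-p}{\frac{x-p}{\gamma}}
=\frac{\bigl\|x-p\bigr\|^2}{\gamma},
\end{equation}
which is exactly the claimed bound once $p$ is replaced by $J_{\gamma A}(x+\gamma u^*)$. I note that maximal monotonicity is invoked only to make $p$ well defined; the inequality itself needs no monotonicity, since it merely selects one point of the graph.

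For \ref{p:21ii} I see two routes. The shortest is a reduction: since $\varphi$ is proper, lower semicontinuous, and convex, $A=\partial\varphi$ is maximally monotone, $J_{\gamma\partial\varphi}=\prox_{\gamma\varphi}$, and the classical Fitzpatrick estimate $F_{\partial\varphi}(x,u^*)\leq\varphi(x)+\varphi^*(u^*)$ gives, via \eqref{e:10}, the pointwise inequality $H_{\partial\varphi}\leq L_\varphi$; combining this with part \ref{p:21i} yields the result at once. Alternatively, one argues directly by mirroring the computation above: set $p=\prox_{\gamma\varphi}(x+\gamma u^*)$, so that $p^*:=u^*+(x-p)/\gamma\in\partial\varphi(p)$ and hence the Fenchel--Young equality $\varphi(p)+\varphi^*(p^*)=\pair{p}{p^*}$ holds. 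Applying the Fenchel--Young inequality to the pairs $(x,p^*)$ and $(p,u^*)$ and substituting this equality collapses $L_\varphi(x,u^*)$ to at least $\pair{x-p}{p^*-u^*}=\|x-p\|^2/\gamma$.

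The main obstacle is not analytical but conceptual: recognizing that a judicious choice of test point in the supremum defining $H_A$ (respectively, a judicious pair in the two Fenchel--Young inequalities) reproduces the resolvent/proximal expression. Once the point $(p,p^*)$ is written down, both proofs are one-line calculations. The only technical care required is the well-definedness of $p$ and the verification that $p^*\in Ap$ (respectively $p^*\in\partial\varphi(p)$), which is precisely the defining property of the (proximal) resolvent.
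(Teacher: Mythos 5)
Your proposal is correct and is essentially the paper's own route: the paper recovers Proposition~\ref{p:21} from Theorem~\ref{t:1}\ref{t:1i} with $W=\Id$ (equivalently $f=\|\cdot\|^2/2$ in Proposition~\ref{p:2}), whose proof is exactly your device of exhibiting the graph point $\bigl(p,\,u^*+(x-p)/\gamma\bigr)$ and evaluating the supremum there, and your reduction $L_\varphi\geq H_{\partial\varphi}$ for part \ref{p:21ii} is precisely Lemma~\ref{l:3}.
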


Following \cite{Car23a}, an extension of
Proposition~\ref{p:21}\ref{p:21i} to reflexive real Banach spaces
was proposed in \cite{Bura25} in the following form (this result is
stated with the condition $\dom H_W=\XX\times\XX^*$ in
\cite{Bura25}, but the weaker, more checkable condition $\dom
W=\XX$ suffices; see Example~\ref{ex:22}). 

\begin{proposition}[\protect{\cite[Theorem~1]{Bura25}}]
\label{p:22}
Let $A\colon\XX\to 2^{\XX^*}$ and $W\colon\XX\to 2^{\XX^*}$ be
maximally monotone, let $x\in\XX$, let $u^*\in\XX^*$, and let
$\gamma\in\RPP$. Suppose that $W$ is
$\alpha$-strongly monotone for some $\alpha\in\RPP$, and
$\dom W=\XX$. Let $x^*\in Wx$ and set 
$z=(W+\gamma A)^{-1}(x^*+\gamma u^*)$. Then 
$H_A(x,u^*)\geq\alpha\|x-z\|^2/\gamma$.
\end{proposition}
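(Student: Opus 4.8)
The plan is to exploit the definition of $z$ to produce one specific point of $\gra A$ whose contribution to the supremum \eqref{e:1} already yields the claimed bound. First I would unravel the defining inclusion $x^*+\gamma u^*\in(W+\gamma A)z=Wz+\gamma Az$: this furnishes $w^*\in Wz$ and $a^*\in Az$ with $x^*+\gamma u^*=w^*+\gamma a^*$. In particular $z\in\dom A$, so $(z,a^*)$ is an admissible pair in \eqref{e:1} and hence $H_A(x,u^*)\geq\pair{x-z}{a^*-u^*}$.

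The next step is to rewrite the right-hand side through the strongly monotone operator $W$. Solving the identity $x^*+\gamma u^*=w^*+\gamma a^*$ for $a^*-u^*$ gives $a^*-u^*=\gamma^{-1}(x^*-w^*)$, so that $H_A(x,u^*)\geq\gamma^{-1}\pair{x-z}{x^*-w^*}$. Because $x^*\in Wx$ and $w^*\in Wz$, the $\alpha$-strong monotonicity of $W$ supplies $\pair{x-z}{x^*-w^*}\geq\alpha\|x-z\|^2$; chaining the two inequalities produces $H_A(x,u^*)\geq\alpha\|x-z\|^2/\gamma$, as desired.

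The computation is short, and I expect the only delicate point to be the well-posedness of $z$ --- namely that $x^*+\gamma u^*$ lies in $\ran(W+\gamma A)$ so that $z$ exists (and is unique). I would address this by recalling that $\dom W=\XX$ makes the sum $W+\gamma A$ maximally monotone via a sum theorem (here $\inte\dom W\cap\dom A=\dom A\neq\emp$), while $\gamma A$ being monotone keeps $W+\gamma A$ $\alpha$-strongly monotone; a strongly monotone maximally monotone operator on a reflexive Banach space is surjective with single-valued inverse, which guarantees that $z$ is a well-defined point of $\XX$. Once this is in place, the selection of $w^*$ and $a^*$ and the subsequent estimates are entirely routine, so the argument reduces to the single structural verification above.
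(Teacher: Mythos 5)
Your proposal is correct and follows essentially the same route as the paper, which recovers this proposition as the special case $\phi=\alpha|\cdot|^2$, $\dom W=\XX$ of Example~\ref{ex:23}: your direct computation is exactly the argument of Theorem~\ref{t:1}\ref{t:1i}--\ref{t:1iii} (exhibit the graph point $(z,\gamma^{-1}(x^*-w^*)+u^*)$ of $A$ and apply strong monotonicity of $W$), and your well-posedness check (sum theorem for maximal monotonicity, then surjectivity and injectivity of the strongly monotone sum) matches Proposition~\ref{p:3}\ref{p:3v}--\ref{p:3vi} and the single-valuedness argument in Example~\ref{ex:23}. The only cosmetic difference is that you invoke Rockafellar's sum theorem via $\inte\dom W\cap\dom A\neq\emp$ where the paper uses the Attouch--Riahi--Th\'era condition, which is trivially satisfied here for the same reason.
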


The main objective of the present paper is to derive new lower
bounds on the Haraux and Fenchel--Young functions in reflexive real
Banach spaces under minimal assumptions on the set-valued operator
$A$ in \eqref{e:1} and the function $\varphi$ in \eqref{e:2},
respectively. As seen above, the basic inequality \eqref{e:0} in
Hilbert spaces can be deduced from Proposition~\ref{p:11}. 
It can also be deduced from Proposition~\ref{p:22} with $W=\Id$. 
However, these propositions yield in general different conclusions.
As we shall see, two notions of resolvent for set-valued operators
implicitly underlie these inequalities: metric resolvents in
Proposition~\ref{p:11}, and warped resolvents in
Proposition~\ref{p:22}. In the Hilbertian setting, metric
resolvents coincide with instances of warped resolvents
and we recover Proposition~\ref{p:21}. 

In Section~\ref{sec:2}, we introduce our notation. Lower bounds on
the Haraux function of general set-valued operators based on metric
resolvents are derived in Section~\ref{sec:3} and lower bounds on
the Fenchel--Young function of proper functions based on metric
proximity operators are derived in Section~\ref{sec:4}. In
Sections~\ref{sec:5} and \ref{sec:6}, we provide alternative bounds
based on warped resolvents for the Haraux function and on warped
proximity operators for the Fenchel--Young function. These bounds
are new even in the basic setting of maximally monotone operators
and lower semicontinuous convex functions in Euclidean spaces, in
which case we show that they can be more easily computable and
sharper than those produced by Proposition~\ref{p:21}.
Section~\ref{sec:7} proposes applications to composite monotone
inclusions and Section~\ref{sec:8} concludes the paper with some
potential directions for future work.

\section{Notation}
\label{sec:2}
Let $A\colon\XX\to 2^{\XX^*}$ and let
$A^{-1}\colon\XX^*\to 2^{\XX}\colon x^*\mapsto
\menge{x\in\XX}{x^*\in Ax}$ be its \emph{inverse}.
The \emph{domain} of $A$ is
$\dom A=\menge{x\in\XX}{Ax\neq\emp}$, the \emph{range} of $A$ is
$\ran A=\bigcup_{x\in\dom A}Ax$, and the set of zeros of $A$ is
$\zer A=\menge{x\in\XX}{0\in Ax}$. We say that $A$ is
\emph{monotone} if
\begin{equation}
\label{e:m1}
\big(\forall (x,x^*)\in\gra A\big)
\big(\forall (y,y^*)\in\gra A\big)
\quad\pair{x-y}{x^*-y^*}\geq 0
\end{equation}
and \emph{maximally monotone} if
\begin{equation}
\label{e:m2}
\brk1{\forall (x,x^*)\in\XX\times\XX^*}\\
\bigl[\:(x,x^*)\in\gra A\;\Leftrightarrow\;
\bigl(\forall (y,y^*)\in\gra A\bigr)\;\;
\pair{x-y}{x^*-y^*}\geq 0\:\bigr].
\end{equation}
Let $\phi\colon\RP\to\RPX$ be increasing and vanishing only at $0$.
Then $A$ is $\phi$-\emph{uniformly monotone} if
\begin{equation}
\label{e:u1}
\brk1{\forall (x,x^*)\in\gra A}
\brk1{\forall (y,y^*)\in\gra A}\quad
\pair{x-y}{x^*-y^*}\geq\phi\brk1{\|x-y\|}.
\end{equation}
In particular, if $\phi=\alpha|\cdot|^2$ for some
$\alpha\in\RPP$, then $A$ is $\alpha$-\emph{strongly monotone}, 
that is, 
\begin{equation}
\label{e:u2}
\brk1{\forall (x,x^*)\in\gra A}
\brk1{\forall (y,y^*)\in\gra A}\quad
\pair{x-y}{x^*-y^*}\geq\alpha\|x-y\|^2.
\end{equation}
If $A$ is monotone and $\dom A\times\ran A\subset\dom H_A$, then 
it is \emph{$3^*$ monotone}.
At last, $A$ is \emph{injective} if 
$(\forall x\in\XX)$ $(\forall y\in\XX)$
$Ax\cap Ay\neq\emp$ $\Rightarrow$ $x=y$.

Let $\varphi\colon\XX\to\RXX$. 
The \emph{domain} of $\varphi$ is 
$\dom\varphi=\menge{x\in\XX}{\varphi(x)<\pinf}$ and the
\emph{conjugate} of $\varphi$ is the function 
$\varphi^*\colon\XX^*\to\RXX\colon x^*\mapsto
\sup_{x\in\XX}\brk{\pair{x}{x^*}-\varphi(x)}$. Further, $\varphi$
is \emph{cofinite} if $\dom\varphi^*=\XX^*$ and 
\emph{supercoercive} if 
$\lim_{\|x\|\to\pinf}\varphi(x)/\|x\|=\pinf$. Now suppose that
$\varphi\colon\XX\to\RX$ is \emph{proper}, i.e.,
$\dom\varphi\neq\emp$. The \emph{subdifferential} of $\varphi$ is
the operator
\begin{equation}
\label{e:subdiff}
\partial \varphi\colon\XX\to 2^{\XX^*}\colon x\mapsto
\menge{x^*\in\XX^*}{(\forall y\in\XX)\;
\pair{y-x}{x^*}+\varphi(x)\leq\varphi(y)}. 
\end{equation}
We denote by $\Gamma_0(\XX)$ the class of proper lower
semicontinuous convex functions from $\XX$ to $\RX$.
Let $f\in\Gamma_0(\XX)$. 
If $f$ is Gateaux differentiable on $\intdom f\neq\emp$, then the
\emph{Bregman distance} associated with $f$ is 
\begin{equation}
\label{e:B}
\begin{aligned}
D_f\colon\XX\times\XX&\to\,[0,\pinf]\\
(x,y)&\mapsto 
\begin{cases}
f(x)-f(y)-\pair{x-y}{\nabla f(y)},&\text{if}\;\;y\in\intdom f;\\
\pinf,&\text{otherwise}.
\end{cases}
\end{aligned}
\end{equation}
Finally, $f$ is a \emph{Legendre function} if it is 
essentially smooth in the sense that $\partial f$ is 
both locally bounded and single-valued on its
domain, and essentially strictly convex in the sense
that $\partial f^*$ is locally bounded on its domain and 
$f$ is strictly convex on every convex subset of $\dom\partial f$
\cite{Ccma01}.

\section{Lower bounds on the Haraux function based on metric
resolvents}
\label{sec:3} 

The Voisei--Z\u{a}linescu inequality \cite[Theorem~2.6]{Vois09}
gives a lower bound on the Haraux function of a maximally monotone
operator in terms of the distance to its graph. The main result of
this section is a sharpening of this inequality. As
in Proposition~\ref{p:11}, the duality mapping of \eqref{e:dm}
plays a central role via the following notion of a metric resolvent
(see \cite[Definition~3.4]{Peno01} and the references therein). 

\begin{definition}
\label{d:7}
Let $A\colon\XX\to2^{\XX^*}$ and let $\Delta$ be the duality
mapping of $\XX$. Then the \emph{metric resolvent} of $A$ is
\begin{equation}
\label{e:jpp}
R_A\colon\XX\to 2^{\XX}\colon x\mapsto
\menge{z\in\XX}{0\in Az+\Delta(z-x)}.
\end{equation}
\end{definition}

Let us recall a few facts about duality mappings.

\begin{lemma}
\label{l:4}
Let $\Delta$ be the duality mapping of $\XX$. Then the following
hold:
\begin{enumerate}
\item
\label{l:4i}
{\em {\cite[Theorem~I.4.4]{Cior90}}}
$\Delta=\partial\|\cdot\|^2/2$.
\item
\label{l:4ii}
{\em {\cite[Corollary~V.2.6]{Cior90}}}
$\Delta$ is maximally monotone. 
\item
\label{l:4iii}
{\em {\cite[Proposition~I.4.7(c)]{Cior90}}}
Let $x\in\XX$. Then $\Delta(-x)=-\Delta(x)$.
\item
\label{l:4iv}
{\em {\cite[Theorem~II.1.8]{Cior90}}}
Suppose that $\XX$ is strictly convex. Then $\Delta$ is strictly
monotone. 
\end{enumerate}
\end{lemma}

\begin{example}
\label{ex:12}
Let $C$ be a nonempty closed convex subset of $\XX$, let $\proj_C$
be the \emph{metric projection} operator onto $C$, i.e.,
\begin{equation}
\proj_C\colon\XX\to 2^{C}\colon x\mapsto
\menge{z\in C}{(\forall y\in C)\;\;\|x-z\|\leq\|x-y\|},
\end{equation}
and let $N_C$ be the normal cone operator of $C$, i.e.,
\begin{equation} 
\label{e:normalcone}
N_C\colon\XX\to 2^{\XX^*}\colon x\mapsto
\begin{cases}
\Menge1{x^*\in\XX^*}{(\forall y\in C)\;
\pair{y-x}{x^*}\leq 0},&\text{if}\;\;x\in C;\\
\emp,&\text{otherwise.}
\end{cases}
\end{equation}
Then it follows from \cite[Remarque~8.1.5a]{Laur72},
Lemma~\ref{l:4}\ref{l:4i}, and Definition~\ref{d:7} that 
$\proj_C=R_{N_C}$.
\end{example}

\begin{notation}
\label{n:1}
Let $\gamma\in\RPP$. We define a norm by
\begin{equation}
\label{e:n}
|||\cdot|||_\gamma\colon\XX\times\XX^*\to\RP\colon(x,x^*)\mapsto
\sqrt{\|x\|^2/\gamma+\gamma\|x^*\|^2}
\end{equation}
and denote the associated distance function to a set 
$\boldsymbol{C}\subset\XX\times\XX^*$ by
\begin{equation}
\label{e:d}
d_{\boldsymbol{C},\gamma}\colon\XX\times\XX^*\to\RPX\colon
(x,x^*)\mapsto\inf_{(y,y^*)\in\boldsymbol{C}}
|||(x,x^*)-(y,y^*)|||_\gamma.
\end{equation}
\end{notation}

The proposed lower bounds are as follows.

\begin{proposition}
\label{p:29}
Let $A\colon\XX\to2^{\XX^*}$, let $x\in\XX$, let $u^*\in\XX^*$, and
let $\gamma\in\RPP$. Suppose that $z\in R_{\gamma(A-u^*)}x$. Then,
using Notation~\ref{n:1}, 
\begin{equation}
\label{e:2025}
\displaystyle{H_A(x,u^*)\geq\dfrac{\|x-z\|^2}{\gamma}
\geq\dfrac{1}{2}d^2_{\gra A,\gamma}(x,u^*)}.
\end{equation}
\end{proposition}
\begin{proof}
Since $z\in R_{\gamma(A-u^*)}x$, it follows from \eqref{e:jpp}
that there exists $z^*\in Az$ such that
$0\in\gamma(z^*-u^*)+\Delta(z-x)$. Consequently, $(z,z^*)\in\gra A$
and, appealing to Lemma~\ref{l:4}\ref{l:4iii}, we obtain
\begin{equation}
\label{e:md}
\gamma(z^*-u^*)\in\Delta(x-z).
\end{equation}
In turn, \eqref{e:1} and \eqref{e:dm} yield
\begin{equation}
H_A(x,u^*)\geq\pair{x-z}{z^*-u^*}=\dfrac{
\pair{x-z}{\gamma(z^*-u^*)}}{\gamma}=\dfrac{\|x-z\|^2}{\gamma}.
\end{equation}
Next, we derive from \eqref{e:md}, \eqref{e:dm}, and \eqref{e:d}
that
\begin{align}
\dfrac{\|x-z\|^2}{\gamma}
&=\dfrac{\|x-z\|^2+\gamma^2\|u^*-z^*\|^2}{2\gamma}
\nonumber\\
&=\dfrac{1}{2}|||(x,u^*)-(z,z^*)|||_\gamma^2
\nonumber\\
&\geq\dfrac{1}{2}\inf_{(y,y^*)\in\gra A}
|||(x,u^*)-(y,y^*)|||_\gamma^2
\nonumber\\
&=\dfrac{1}{2}d_{\gra A,\gamma}^2(x,u^*),
\end{align}
which yields the rightmost inequality.
\end{proof}

\begin{theorem}
\label{t:30}
Let $A\colon\XX\to2^{\XX^*}$ be maximally monotone, let $x\in\XX$, 
let $u^*\in\XX^*$, and let $\gamma\in\RPP$. Then, using
Notation~\ref{n:1}, the following hold:
\begin{enumerate}
\item
\label{t:30i}
$R_{\gamma(A-u^*)}x\neq\emp$.
\item
\label{t:30ii}
Let $z\in R_{\gamma(A-u^*)}x$. Then
\begin{equation}
\label{e:30}
\displaystyle{H_A(x,u^*)\geq\dfrac{\|x-z\|^2}{\gamma}
\geq\dfrac{1}{2}d^2_{\gra A,\gamma}(x,u^*)}.
\end{equation}
\item
\label{t:30iii}
Suppose that $\XX$ is strictly convex. Then $R_{\gamma(A-u^*)}$ is
single-valued and 
\begin{equation}
\label{e:30'}
\displaystyle{H_A(x,u^*)\geq
\dfrac{\|x-R_{\gamma(A-u^*)}x\|^2}{\gamma}
\geq\dfrac{1}{2}d^2_{\gra A,\gamma}(x,u^*)}.
\end{equation}
\end{enumerate}
\end{theorem}
\begin{proof}
\ref{t:30i}:
By \cite[Theorem~10.6]{Simo98}, 
$(x,\gamma u^*)\in\XX\times\XX^*=\gra(\gamma A)+\gra(-\Delta)$. 
Therefore, there exists $(z,z^*)\in\gra A$ such that
$(x-z,\gamma u^*-\gamma z^*)\in\gra(-\Delta)$, hence
$\gamma(z^*-u^*)\in\Delta(x-z)$, which implies that
$0\in\gamma(z^*-u^*)+\Delta(z-x)\subset
\gamma(A-u^*)z+\Delta(z-x)$. In view of Definition~\ref{d:7},
$z\in R_{\gamma(A-u^*)}x$.

\ref{t:30ii}: This follows from \ref{t:30i} and
Proposition~\ref{p:29}.

\ref{t:30iii}: Taking \ref{t:30ii} into account, it is enough to
show that $R_{\gamma(A-u^*)}x$ contains at most one point. Suppose
that $\{z_1,z_2\}\subset R_{\gamma(A-u^*)}x$. Then we infer from
Definition~\ref{d:7} that there exist $w^*_1\in\Delta(z_1-x)$ and 
$w^*_2\in\Delta(z_2-x)$ such that $(z_1,-w^*_1)\in\gra A$ and 
$(z_2,-w^*_2)\in\gra A$. By monotonicity of $A$, 
\begin{equation}
\pair{(z_1-x)-(z_2-x)}{w^*_1-w^*_2}=
\pair{z_1-z_2}{w^*_1-w^*_2}\leq 0.
\end{equation}
However, Lemma~\ref{l:4}\ref{l:4ii} forces
\begin{equation}
\pair{(z_1-x)-(z_2-x)}{w^*_1-w^*_2}\geq 0.
\end{equation}
Thus, $\pair{(z_1-x)-(z_2-x)}{w^*_1-w^*_2}=0$ and, since 
Lemma~\ref{l:4}\ref{l:4iv} asserts that $\Delta$ is strictly
monotone, we conclude that $z_1-x=z_2-x$, i.e., that $z_1=z_2$.
\end{proof}

\begin{remark}
\label{r:23}
Theorem~\ref{t:30} extends existing results as follows:
\begin{enumerate}
\item
\label{r:23i}
For $\gamma=1$, it follows in particular from 
Theorem~\ref{t:30}\ref{t:30ii} that, if $z\in R_{A-u^*}x$, then 
$H_A(x,u^*)\geq\|x-z\|^2$. This conclusion is identical to that of 
\cite[Lemma~2.3]{Peno06} (see Proposition~\ref{p:11}). Indeed,
arguing as in the proof of Theorem~\ref{t:30}\ref{t:30i}, the
inclusion $z\in R_{A-u^*}x$ secures the existence of 
$(z,z^*)\in\gra A$ such that $z^*-u^*\in\Delta(x-z)$.
\item
\label{r:23ii}
Regarding Theorem~\ref{t:30}\ref{t:30ii}, the smaller lower bound
\begin{equation}
\label{e:14}
H_A(x,u^*)\geq\dfrac{1}{4}d_{\gra A,1}^2(x,u^*)
\end{equation}
was established in \cite[Theorem~2.6]{Vois09} using more technical
arguments. Our simple proof improves it by a factor 2, and for any
$\gamma\in\RPP$.
\item
\label{r:23iii}
Regarding Theorem~\ref{t:30}\ref{t:30ii}, in the special case when
$\XX$ is Hilbertian and $\gamma=1$, the inequality
\begin{equation}
\label{e:39}
H_A(x,u^*)\geq\dfrac{1}{2}d_{\gra A,1}^2(x,u^*)
\end{equation}
appears in \cite[Theorem~4]{Bura25}.
\item
\label{r:23iv}
As pointed out in \cite[Exercise~9.7.11]{Borw10}, the constant
$1/2$ in Theorem~\ref{t:30}\ref{t:30ii} is the best possible to the
extent that, if $\XX$ is Hilbertian, $A=\Id$, 
and $\gamma=1$, then the inequalities become equalities.
\item
\label{r:23v}
Suppose that $\XX$ is Hilbertian. Then $\Delta=\Id$ and it follows
from Definition~\ref{d:7} that $R_{\gamma A}$ coincides with the
usual resolvent $J_{\gamma A}=(\Id+\gamma A)^{-1}$ of
\cite[Definition~23.1]{Livre1}. In this context, the inequality
$H_A(x,u^*)\geq\|x-J_{\gamma(A-u^*)}x\|^2/\gamma$ provided by 
Theorem~\ref{t:30}\ref{t:30iii} is easily seen to be equivalent to
the inequality $H_A(x,u^*)\geq\bigl\|x-J_{\gamma A}
(x+\gamma u^*)\bigr\|^2/\gamma$, which is precisely that
established in \cite[Section~2]{Car23a} (see
Proposition~\ref{p:21}\ref{p:21i}).
\end{enumerate}
\end{remark}

\section{Lower bounds on the Fenchel--Young function based on 
metric proximity operators}
\label{sec:4} 

We turn our attention to the case when $A$ is a subdifferential
operator to obtain lower bounds on the Fenchel--Young function of
\eqref{e:2}. The strategy is to exploit the conclusions of
Section~\ref{sec:3} via the following inequality which, in the case
of Hilbert spaces, appears in \cite[p.~167]{Brez76} (if $\varphi$
is convex, it is also a consequence of \cite[Theorem~3.7]{Fitz88}
and \eqref{e:10}). For completeness, we record a proof in our
context.

\begin{lemma}
\label{l:3}
Let $\varphi\colon\XX\to\RX$ be proper. 
Then $L_\varphi\geq H_{\partial\varphi}$. 
\end{lemma}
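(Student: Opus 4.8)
The plan is to establish the pointwise inequality $L_\varphi(x,u^*)\geq\pair{x-y}{y^*-u^*}$ for every $(x,u^*)\in\XX\times\XX^*$ and every $(y,y^*)\in\gra\partial\varphi$, and then to pass to the supremum over $\gra\partial\varphi$ in the definition~\eqref{e:1} of the Haraux function. I would fix $(x,u^*)$ and first dispose of the trivial case: if $L_\varphi(x,u^*)=\pinf$ the inequality holds automatically, so I may assume that $\varphi(x)$ and $\varphi^*(u^*)$ are both finite.

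Next I would fix $(y,y^*)\in\gra\partial\varphi$. The crucial ingredient is the Fenchel--Young equality on the graph of the subdifferential: by \eqref{e:6} we have $L_\varphi(y,y^*)=0$, that is, $\varphi(y)+\varphi^*(y^*)=\pair{y}{y^*}$; in particular $\varphi(y)$ and $\varphi^*(y^*)$ are finite. Expanding $\pair{x-y}{y^*-u^*}$ by bilinearity and substituting this equality for the resulting $\pair{y}{y^*}$ term, I would rewrite the gap as
\begin{equation}
L_\varphi(x,u^*)-\pair{x-y}{y^*-u^*}
=\brk1{\varphi(x)+\varphi^*(y^*)-\pair{x}{y^*}}
+\brk1{\varphi(y)+\varphi^*(u^*)-\pair{y}{u^*}}
=L_\varphi(x,y^*)+L_\varphi(y,u^*).
\end{equation}
All six function and conjugate values appearing here are finite, so this algebraic manipulation is legitimate.

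The two grouped terms on the right are precisely the Fenchel--Young functions evaluated at the ``crossed'' pairs $(x,y^*)$ and $(y,u^*)$, and each is nonnegative by the Fenchel--Young inequality $L_\varphi\geq 0$ recorded in \eqref{e:6}. Hence $L_\varphi(x,u^*)\geq\pair{x-y}{y^*-u^*}$, and taking the supremum over all $(y,y^*)\in\gra\partial\varphi$ yields $L_\varphi(x,u^*)\geq H_{\partial\varphi}(x,u^*)$.

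I do not expect a genuine obstacle: the argument is a short computation. The only point that requires care is the bookkeeping of $\pinf$ values — one must isolate the case $L_\varphi(x,u^*)=\pinf$ at the outset and invoke the equality $\varphi(y)+\varphi^*(y^*)=\pair{y}{y^*}$ to ensure that every quantity in the displayed identity is finite before rearranging it. The conceptual heart of the proof is simply the recognition that the difference between the Fenchel--Young gap and the Haraux summand splits exactly into the two crossed Fenchel--Young gaps $L_\varphi(x,y^*)$ and $L_\varphi(y,u^*)$, each controlled by~\eqref{e:6}.
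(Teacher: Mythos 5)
Your proof is correct and is essentially the paper's argument in a different packaging: the paper chains the two inequalities $\varphi^*(u^*)\geq\pair{y}{u^*}-\varphi(y)$ and $\varphi(x)\geq\pair{x-y}{y^*}+\varphi(y)$, which are exactly the nonnegativity of your two crossed gaps $L_\varphi(y,u^*)$ and $L_\varphi(x,y^*)$ (the latter via the Fenchel--Young equality at $(y,y^*)$). Your exact identity $L_\varphi(x,u^*)=\pair{x-y}{y^*-u^*}+L_\varphi(x,y^*)+L_\varphi(y,u^*)$ and the accompanying finiteness bookkeeping are both sound.
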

\begin{proof}
Let $(x,u^*)\in\XX\times\XX^*$ and 
$(y,y^*)\in\gra\partial\varphi$. Then, since $y\in\dom\varphi$,
\eqref{e:subdiff} yields
\begin{align}
\varphi(x)+\varphi^*(u^*)-\pair{x}{u^*}
&\geq\varphi(x)+\pair{y}{u^*}-\varphi(y)-\pair{x}{u^*}
\nonumber\\
&\geq\pair{x-y}{y^*}+\varphi(y)+\pair{y}{u^*}-\varphi(y)-
\pair{x}{u^*}\nonumber\\
&=\pair{x-y}{y^*-u^*}.
\end{align}
In view of \eqref{e:1} and \eqref{e:2}, this shows that
$L_\varphi(x,u^*)\geq H_{\partial\varphi}(x,u^*)$.
\end{proof}

Combining Proposition~\ref{p:29} and Lemma~\ref{l:3} yields the
following set of inequalities in terms of the metric resolvent of
Definition~\ref{d:7}.

\begin{proposition}
\label{p:24}
Let $\varphi\colon\XX\to\RX$ be proper, let $x\in\XX$, let
$u^*\in\XX^*$, and let $\gamma\in\RPP$. Suppose that 
$z\in R_{\partial(\gamma(\varphi-u^*))}x$. Then,
using Notation~\ref{n:1}, 
\begin{equation}
\label{e:202}
\displaystyle{
L_\varphi(x,u^*)\geq H_{\partial\varphi}(x,u^*)
\geq\dfrac{\|x-z\|^2}{\gamma}
\geq\dfrac{1}{2}d^2_{\gra\partial\varphi,\gamma}(x,u^*)}.
\end{equation}
\end{proposition}

To refine these inequalities, let us recall the notion of a metric
proximity operator, first introduced by Moreau \cite{Mor62b} in
Hilbert spaces.

\begin{definition}
\label{d:8}
Let $\varphi\in\Gamma_0(\XX)$. Then the \emph{metric proximity
operator} of $f$ is
\begin{equation}
\label{e:jjm1}
\prox_\varphi\colon\XX\to 2^{\XX}\colon x\mapsto
\underset{y\in\XX}{\Argmin}\:
\brk2{\varphi(y)+\dfrac{1}{2}\|x-y\|^2}.
\end{equation}
\end{definition}

The following theorem establishes lower bounds on $L_\varphi$ for
$\varphi\in\Gamma_0(\XX)$.

\begin{theorem}
\label{t:32}
Let $\varphi\in\Gamma_0(\XX)$, let $x\in\XX$, 
let $u^*\in\XX^*$, and let $\gamma\in\RPP$. Then, using
Notation~\ref{n:1}, the following hold:
\begin{enumerate}
\item
\label{t:32i}
$\prox_{\gamma(\varphi-u^*)}x\neq\emp$.
\item
\label{t:32ii}
Let $z\in\prox_{\gamma(\varphi-u^*)}x$. Then
\begin{equation}
\label{e:32}
\displaystyle{
L_\varphi(x,u^*)\geq H_{\partial\varphi}(x,u^*)
\geq\dfrac{\|x-z\|^2}{\gamma}
\geq\dfrac{1}{2}d^2_{\gra\partial\varphi,\gamma}(x,u^*)}.
\end{equation}
\item
\label{t:32iii}
Suppose that $\XX$ is strictly convex. Then
$\prox_{\gamma(\varphi-u^*)}$ is single-valued and 
\begin{equation}
\label{e:32'}
\displaystyle{
L_\varphi(x,u^*)\geq H_{\partial\varphi}(x,u^*)
\geq\dfrac{\|x-\prox_{\gamma(\varphi-u^*)}x\|^2}{\gamma}
\geq\dfrac{1}{2}d^2_{\gra\partial\varphi,\gamma}(x,u^*)}.
\end{equation}
\end{enumerate}
\end{theorem}
\begin{proof}
First, \cite[Proposition~47.F(1)]{Zeid85} asserts that
$\partial\varphi$ is maximally monotone. Next, we derive from 
Definition~\ref{d:8}, Fermat's rule
\cite[Proposition~47.12]{Zeid85}, the subdifferential sum rule
\cite[Theorem~47.B]{{Zeid85}}, Lemma~\ref{l:4}\ref{l:4i}, and
Definition~\ref{d:7} that
\begin{align}
\prox_{\gamma(\varphi-u^*)}x
&=\underset{y\in\XX}{\Argmin}\:\brk2{\gamma\brk1{\varphi(y)
-\pair{y}{u^*}}+\dfrac{1}{2}\|x-y\|^2}
\nonumber\\
&=\zer\partial\brk2{\gamma\brk1{\varphi-u^*}
+\frac{1}{2}\|\cdot-x\|^2}
\nonumber\\
&=\zer\brk1{\gamma\brk1{\partial{\varphi}-u^*}+
\Delta\brk{\cdot-x}}
\nonumber\\
&=\Menge1{z\in\XX}{0\in{\gamma\brk{\partial{\varphi}-u^*}}z+
\Delta\brk{z-x}}
\nonumber\\
&=R_{\gamma\brk{\partial{\varphi}-u^*}}x.
\end{align}
The claims are therefore consequences of Theorem~\ref{t:30}, where
$A=\partial\varphi$, and Lemma~\ref{l:3}.
\end{proof}

\begin{remark}
\label{r:24}
Suppose that $\XX$ is Hilbertian. Then, as in 
Remark~\ref{r:23}\ref{r:23v}, the inequality
$L_\varphi(x,u^*)\geq\|x-\prox_{\gamma(\varphi-u^*)}x\|^2/\gamma$ 
from Theorem~\ref{t:32}\ref{t:32iii} is equivalent to
$L_\varphi(x,u^*)\geq\|x-\prox_{\gamma\varphi}
(x+\gamma u^*)\|^2/\gamma$, which is precisely the lower bound 
established in \cite[Lemma~1.1]{Car23a} (see
Proposition~\ref{p:21}\ref{p:21ii}).
\end{remark}

\section{Lower bounds on the Haraux function based on warped
resolvents} 
\label{sec:5}

We derive lower bounds on the Haraux function of a general
set-valued operator $A\colon\XX\to 2^{\XX^*}$ in terms of an
auxiliary set-valued operator $W\colon\XX\to 2^{\XX^*}$ through the
notion of a warped resolvent. The results are then specialized to
the case when $W$ is at most single-valued and, in particular, when
it is the gradient of a Legendre function, which gives rise to
lower bounds in terms of Bregman distances. Several examples
illustrate these new bounds and an application to the asymptotic
behavior of families of set-valued operators is provided. Our
analysis relies on the following notion of a warped resolvent,
which was introduced in the case of at-most single-valued kernels
in \cite[Definition~1.1]{Jmaa20}.

\begin{definition}
\label{d:wr}
Let $A\colon\XX\to 2^{\XX^*}$ and $K\colon\XX\to 2^{\XX^*}$. Then 
the \emph{warped resolvent} of $A$ with kernel $K$ is
$J_A^K=(K+A)^{-1}\circ K$. 
\end{definition}

\begin{proposition}
\label{p:12}
Let $A\colon\XX\to2^{\XX^*}$, let $W\colon\XX\to2^{\XX^*}$, 
let $x\in\XX$, let $u^*\in\XX^*$, and let $\gamma\in\RPP$. Then 
\begin{equation}
\label{e:88}
J_{\gamma(A-u^*)}^Wx=(W+\gamma A)^{-1}(Wx+\gamma u^*).
\end{equation}
\end{proposition}
\begin{proof}
Let $z\in\XX$. Then 
\begin{eqnarray}
z\in J_{\gamma(A-u^*)}^Wx
&\Leftrightarrow&
z\in\brk1{W+\gamma(A-u^*)}^{-1}(Wx)\nonumber\\
&\Leftrightarrow&
(\exi x^*\in Wx)\quad z\in\brk1{W+\gamma(A-u^*)}^{-1}x^*\nonumber\\
&\Leftrightarrow&
(\exi x^*\in Wx)\quad x^*+\gamma u^*\in Wz+\gamma Az\nonumber\\
&\Leftrightarrow&
(\exi x^*\in Wx)\quad z\in(W+\gamma A)^{-1}(x^*+\gamma u^*)
\nonumber\\
&\Leftrightarrow&
z\in(W+\gamma A)^{-1}(Wx+\gamma u^*),
\end{eqnarray}
which proves \eqref{e:88}.
\end{proof}

\begin{proposition}
\label{p:6}
Let $A\colon\XX\to 2^{\XX^*}$, let $W\colon\XX\to 2^{\XX^*}$,
let $u^*\in\XX^*$, and let $\gamma\in\RPP$. Suppose that 
$x\in\dom W$ and that $z\in J_{\gamma(A-u^*)}^Wx$. Then the
following hold:
\begin{enumerate}
\item
\label{p:6i}
There exist $x^*\in Wx$ and $z^*\in Wz$ such that 
$H_A(x,u^*)\geq{\pair{x-z}{x^*-z^*}}/{\gamma}$.
\item
\label{p:6ii}
Suppose that $W$ is $\phi$-uniformly monotone. Then 
$H_A(x,u^*)\geq{\phi(\|x-z\|)}/{\gamma}$.
\item
\label{p:6iii}
Suppose that $W$ is $\alpha$-strongly monotone. Then 
$H_A(x,u^*)\geq{\alpha\|x-z\|^2}/{\gamma}$.
\item
\label{p:6iv}
Suppose that $W$ is at most single-valued. Then
$H_A(x,u^*)\geq{\pair{x-z}{Wx-Wz}}/{\gamma}$.
\item
\label{p:6v}
Let $f\in\Gamma_0(\XX)$ be Gateaux differentiable on 
$\dom\nabla f=\intdom f$ and suppose that $W=\nabla f$. Then 
\begin{equation}
H_A(x,u^*)\geq\dfrac{D_f(x,z)+D_f(z,x)}{\gamma}.
\end{equation}
\end{enumerate}
\end{proposition}
\begin{proof}
\ref{p:6i}:
Since Proposition~\ref{p:12} asserts that
$z\in J_{\gamma(A-u^*)}^Wx=(W+\gamma A)^{-1}(Wx+\gamma u^*)$, 
there exists $x^*\in Wx$ such that 
$x^*+\gamma u^*\in Wz+\gamma Az$, from which we
deduce that $z\in\dom W\cap\dom A$ and that there exists $z^*\in
Wz$ such that $x^*+\gamma u^*\in z^*+\gamma Az$. In turn,
\begin{equation}
\label{e:51}
\brk3{z,\frac{x^*-z^*}{\gamma}+u^*}\in\gra A.
\end{equation}
Consequently,
\begin{equation}
H_A(x,u^*)=\sup_{(y,y^*)\in\gra A}\pair{x-y}{y^*-u^*}
\geq\pair2{x-z}{\frac{x^*-z^*}{\gamma}+u^*-u^*}
=\dfrac{\pair{x-z}{x^*-z^*}}{\gamma}.
\end{equation}

\ref{p:6ii}: This follows from \ref{p:6i} and \eqref{e:u1}.

\ref{p:6iii}: Take $\phi=\alpha|\cdot|^2$ in \ref{p:6ii}.

\ref{p:6iv}: An immediate consequence of \ref{p:6i}.

\ref{p:6v}: This follows from \ref{p:6iv} and \eqref{e:B}.
\end{proof}

Let us characterize the situation in which the point $z$ 
in Proposition~\ref{p:6}\ref{p:6iv} is $x$ itself .

\begin{proposition}
\label{p:7}
In the setting of Proposition~\ref{p:6}\ref{p:6iv}, consider the
following statements:
\begin{enumerate}[label={\normalfont[\alph*]}]
\item
\label{p:7a}
$x\in J_{\gamma(A-u^*)}^Wx$. 
\item
\label{p:7b}
$(x,u^*)\in\gra A$.
\item
\label{p:7c}
$H_A(x,u^*)=0$.
\end{enumerate}
Then the following hold:
\begin{enumerate}
\item
\label{p:7i}
\ref{p:7a}$\Leftrightarrow$\ref{p:7b}.
\item
\label{p:7ii}
Suppose that $A$ is monotone. Then 
\ref{p:7b}$\Rightarrow$\ref{p:7c}.
\item
\label{p:7iii}
Suppose that $A$ is maximally monotone. Then 
\ref{p:7b}$\Leftrightarrow$\ref{p:7c}.
\end{enumerate}
\end{proposition}
\begin{proof}
\ref{p:7i}: We have
$x\in J_{\gamma(A-u^*)}^Wx$ $\Leftrightarrow$
$x\in (W+\gamma(A-u^*))^{-1}(Wx)$ $\Leftrightarrow$
$Wx\in Wx+\gamma(Ax-u^*)$ $\Leftrightarrow$ $u^*\in Ax$.

\ref{p:7ii}: By monotonicity,
$(\forall(y,y^*)\in\gra A)$ $\pair{x-y}{y^*-u^*}\leq 0$. Therefore,
$H_A(x,u^*)\leq 0$. At the same time, 
$H_A(x,u^*)=\sup_{(y,y^*)\in\gra A}\pair{x-y}{y^*-u^*}
\geq\pair{x-x}{u^*-u^*}=0$. 

\ref{p:7iii}: See \eqref{e:5}.
\end{proof}

The following result ensures the existence of the point $z$ in
Proposition~\ref{p:6}.

\begin{proposition}
\label{p:3}
Let $A\colon\XX\to 2^{\XX^*}$, let $W\colon\XX\to 2^{\XX^*}$,
let $u^*\in\XX^*$, and let $\gamma\in\RPP$. Suppose that 
$x\in\dom W$ and that one of the following holds:
\begin{enumerate}
\item
\label{p:3i}
There exists $x^*\in Wx$ such that 
$x^*+\gamma u^*\in\ran(W+\gamma A)$.
\item
\label{p:3ii}
$W+\gamma A$ is surjective.
\item
\label{p:3iii}
$W+\gamma A$ is maximally monotone and $(W+\gamma A)^{-1}$ is
locally bounded at every point in $\XX^*$.
\item
\label{p:3iv}
$W+\gamma A$ is maximally monotone and one of the following is
satisfied:
\begin{enumerate}
\item
$\dom W\cap\dom A$ is bounded. 
\item
$\dom W\cap\dom A$ is unbounded and 
\end{enumerate}
\begin{equation}
\lim_{\substack{y\in\dom W\cap\dom A\\ \|y\|\to\pinf}}\;
\brk3{\displaystyle{\inf_{y^*\in Wy+\gamma Ay}}\|y^*\|}=\pinf.
\end{equation}
\item
\label{p:3v}
$W+\gamma A$ is maximally monotone and $\phi$-uniformly monotone
with $\phi(t)/t\to\pinf$ as $t\to\pinf$.
\item
\label{p:3vi}
$W+\gamma A$ is maximally monotone and $\alpha$-strongly monotone
for some $\alpha\in\RPP$.
\item
\label{p:3vii}
$W$ and $A$ are monotone, $W+\gamma A$ is maximally monotone,
$\ran W+\gamma\ran A=\XX^*$, and one of the following is satisfied:
\begin{enumerate}
\item
\label{p:3viia}
$W$ and $A$ are $3^*$ monotone. 
\item
\label{p:3viib}
$W$ is $3^*$ monotone and $\dom A\subset\dom W$.
\item
\label{p:3viic}
$A$ is $3^*$ monotone and $\dom W\subset\dom A$.
\end{enumerate}
\item
\label{p:3ix}
$W=\partial f$, with $f\in\Gamma_0(\XX)$, $A$ is monotone,
$\partial f+\gamma A$ is maximally monotone,
$\dom\partial f^*+\gamma\ran A=\XX^*$, and either
$\dom A\subset\dom\partial f$ or $A$ is $3^*$ monotone.
\item
\label{p:3viii}
$W$ and $A$ are monotone, $W+\gamma A$ is maximally monotone, 
and one of the following is satisfied:
\begin{enumerate}
\item
\label{p:3viiia}
$W$ is $3^*$ monotone and surjective.
\item
\label{p:3viiib}
$A$ is $3^*$ monotone and surjective.
\end{enumerate}
\end{enumerate}
Then $J_{\gamma(A-u^*)}^Wx\neq\emp$.
\end{proposition}
\begin{proof}
For convenience, we set $M=W+\gamma A$. 

\ref{p:3i}:
Since $x^*+\gamma u^*\in\ran M=\dom M^{-1}$,
we have $M^{-1}(x^*+\gamma u^*)\neq\emp$. Therefore, 
by Proposition~\ref{p:12}, 
$J_{\gamma(A-u^*)}^Wx=M^{-1}(Wx+\gamma u^*)\neq\emp$.

\ref{p:3ii}$\Rightarrow$\ref{p:3i}: Indeed, $\ran M=\XX^*$.

\ref{p:3iii}$\Rightarrow$\ref{p:3ii}: 
This follows from the Br\'ezis--Browder surjectivity theorem 
\cite[Theorem~32.G]{Zei90B}.

\ref{p:3iv}$\Rightarrow$\ref{p:3ii}: 
See \cite[Corollary~32.35]{Zei90B}.

\ref{p:3v}$\Rightarrow$\ref{p:3iv}: 
In view of \ref{p:3iv}, we assume that $\dom M$ is unbounded and
show that 
\begin{equation}
\label{e:9}
\lim_{\substack{y\in\dom M\\ \|y\|\to\pinf}}\;
\brk3{\displaystyle{\inf_{y^*\in My}}\|y^*\|}=\pinf.
\end{equation}
For this purpose, fix $(v,v^*)\in\gra M$. Then \eqref{e:u1} yields
\begin{equation}
\brk1{\forall (y,y^*)\in\gra M}\quad
\|y-v\|\,\|y^*-v^*\|\geq\pair{y-v}{y^*-v^*}\geq\phi\brk1{\|y-v\|}.
\end{equation}
Therefore
\begin{equation}
\label{e:33}
\brk1{\forall (y,y^*)\in\gra M}\quad y\neq v\;\Rightarrow\;
\|y^*\|
\geq\|y^*-v^*\|-\|v^*\|
\geq\dfrac{\phi\brk1{\|y-v\|}}{\|y-v\|}-\|v^*\|.
\end{equation}
Since the right-hand side goes to $\pinf $ as $\|y\|\to\pinf$, we
obtain \eqref{e:9}.

\ref{p:3vi}$\Rightarrow$\ref{p:3v}: Take $\phi=\alpha|\cdot|^2$.

\ref{p:3vii}$\Rightarrow$\ref{p:3ii}: It follows from the
assumptions and the reflexive Banach space version
\cite[Theorem~2.2]{Rei79b} of the Br\'ezis--Haraux theorem
\cite[Th\'eor\`emes~3 et 4]{Brez76} that 
$\inte\ran M=\inte(\ran W+\gamma\ran A)$, which 
implies that $\ran M=\XX^*$.

\ref{p:3ix}$\Rightarrow$\ref{p:3vii}: Indeed, 
$W$ is $3^*$ monotone by \cite[Proposition~32.42]{Zei90B} and
$\ran W=\ran\partial f=\dom(\partial f)^{-1}=\dom\partial f^*$ 
by \cite[Theorem~51.A(ii)]{Zeid85}.

\ref{p:3viii}$\Rightarrow$\ref{p:3ii}: See
\cite[Corollary~11 and Remark~9(iv)]{Buim25}.
\end{proof}

\begin{example}
\label{ex:23}
Let $A\colon\XX\to 2^{\XX^*}$ and $W\colon\XX\to 2^{\XX^*}$
be maximally monotone, let $x\in\dom W$, let $u^*\in\XX^*$, and 
let $\gamma\in\RPP$. Suppose that the cone generated by 
$\dom W-\dom A$ is a closed vector subspace of $\XX$ and that
$W$ is $\phi$-uniformly monotone with $\phi(t)/t\to\pinf$ as 
$t\to\pinf$. Let $z\in J_{\gamma(A-u^*)}^Wx$. Then 
$H_A(x,u^*)\geq\phi(\|x-z\|)/{\gamma}$.
\end{example}
\begin{proof}
It follows from the Attouch--Riahi--Th\'era theorem 
\cite[Corollary~32.3]{Simo08} that $W+\gamma A$ is maximally
monotone. Additionally, $W+\gamma A$ is $\phi$-uniformly monotone
and therefore Proposition~\ref{p:3}\ref{p:3v} guarantees that
$J_{\gamma(A-u^*)}^Wx\neq\emp$. The conclusion therefore follows
from Proposition~\ref{p:6}\ref{p:6ii}.
\end{proof}

An important consequence of Propositions~\ref{p:6}\ref{p:6iv} and
\ref{p:3} is the following.

\begin{theorem}
\label{t:1}
Let $A\colon\XX\to 2^{\XX^*}$, let $\emp\neq\DD\subset\XX$, let 
$W\colon\DD\to\XX^*$, let $x\in\DD$, let $u^*\in\XX^*$, and let
$\gamma\in\RPP$. Suppose that $\DD\cap\dom A\neq\emp$ and that one
of properties \ref{p:3i}--\ref{p:3viii} in Proposition~\ref{p:3}
is satisfied, together with one of the following:
\begin{enumerate}
\item
\label{t:1i}
$W+\gamma A$ is injective.
\item
\label{t:1ii}
$W+\gamma A$ is strictly monotone.
\item
\label{t:1iii}
$W$ is uniformly monotone and $A$ is monotone.
\end{enumerate}
Set $z=J_{\gamma(A-u^*)}^Wx$. Then 
$H_A(x,u^*)\geq{\pair{x-z}{Wx-Wz}}/{\gamma}$.
\end{theorem}
\begin{proof}
Set $M=W+\gamma A$. In view of Propositions~\ref{p:6}\ref{p:6iv},
\ref{p:3}, and \ref{p:12}, it remains to show that the set
$M^{-1}(Wx+\gamma u^*)=J_{\gamma(A-u^*)}^Wx$ is a
singleton. 

\ref{t:1i}:
Let $(x^*,x_1)$ and $(x^*,x_2)$ be points in $\gra M^{-1}$. Then
$x^*\in Mx_1\cap Mx_2$ and therefore, by injectivity, $x_1=x_2$. 
Thus, $M^{-1}$ is at most single-valued.

\ref{t:1ii}$\Rightarrow$\ref{t:1i}:
Let $(x_1,x_2)\in\XX\times\XX$ be such that there exists
$x^*\in Mx_1\cap Mx_2$. Then $(x_1,x^*)$ and $(x_2,x^*)$ lie in
$\gra M$ and $\pair{x_1-x_2}{x^*-x^*}=0$. The strict monotonicity
of $M$ then forces $x_1=x_2$.

\ref{t:1iii}$\Rightarrow$\ref{t:1ii}:
Since $W$ is strictly monotone and $\gamma A$ is monotone, 
$W+\gamma A$ is strictly monotone.
\end{proof}

\begin{example}
\label{ex:24}
Let $A\colon\XX\to 2^{\XX^*}$, let $\emp\neq\DD\subset\XX$,
let $W\colon\DD\to\XX^*$, let $x\in\DD$, let $u^*\in\XX^*$, and 
let $\gamma\in\RPP$. Suppose that $A$ and $W$ are maximally
monotone, that the cone generated by $\DD-\dom A$ is a closed vector
subspace of $\XX$, and that $W$ is $\phi$-uniformly monotone with 
$\phi(t)/t\to\pinf$ as $t\to\pinf$. Set $z=J_{\gamma(A-u^*)}^Wx$.
Then $H_A(x,u^*)\geq\phi(\|x-z\|)/{\gamma}$.
\end{example}
\begin{proof}
The Attouch--Riahi--Th\'era theorem \cite[Corollary~32.3]{Simo08} 
guarantees that $W+\gamma A$ is maximally monotone. Since it is
also $\phi$-uniformly monotone, condition \ref{p:3v} of
Proposition~\ref{p:3} is satisfied. The conclusion therefore
follows from Theorem~\ref{t:1}\ref{t:1iii} and \eqref{e:u1}.
\end{proof}

Our next result establishes a lower bound in terms of Bregman
distances. Here, $W$ is the gradient of a Legendre function 
$f\in\Gamma_0(\XX)$ and the warped resolvent $J^{\nabla f}_{A}$
becomes the Bregman resolvent of $A$ studied in
\cite[Section~3.3]{Sico03}.

\begin{proposition}
\label{p:2}
Let $A\colon\XX\to 2^{\XX^*}$ be maximally monotone, 
let $f\in\Gamma_0(\XX)$ be a Legendre function, 
let $x\in\intdom f$, let $u^*\in\XX^*$, and let $\gamma\in\RPP$. 
Suppose that $(\intdom f)\cap\dom A\neq\emp$ and that one of the 
following holds:
\begin{enumerate}
\item
\label{p:2i}
$\gamma u^*+\intdom f^*\subset\ran(\nabla f+\gamma A)$.
\item
\label{p:2ii}
$(\intdom f^*)+\gamma\ran A=\XX^*$ and one of the following is
satisfied:
\begin{enumerate}
\item
$A$ is $3^*$ monotone.
\item
\label{p:2iib}
$\dom A\subset\intdom f$.
\end{enumerate}
\end{enumerate}
Set $z=J_{\gamma(A-u^*)}^{\nabla f}x$. Then
\begin{equation}
H_A(x,u^*)\geq\dfrac{D_f(x,z)+D_f(z,x)}{\gamma}.
\end{equation}
\end{proposition}
\begin{proof}
We apply Theorem~\ref{t:1} with $W=\nabla f$ and $\DD=\dom W$. 
First, since $f$ is essentially smooth, \cite[Theorem~5.6]{Ccma01}
asserts that $\DD=\intdom f$. In addition, since $W$ is maximally 
monotone and 
$(\intdom W)\cap\dom A=(\intdom f)\cap\dom A\neq\emp$, the
Rockafellar sum theorem \cite[Theorem~32.I]{Zei90B} asserts that 
$W+\gamma A$ is maximally monotone. 
Moreover, since $f$ is essentially strictly convex, it is
strictly convex on the convex set $\DD=\intdom f$, which makes
$W$ strictly monotone \cite[Proposition~25.10]{Zei90B}. In turn,
$W+\gamma A$ is
strictly monotone. This shows that property \ref{t:1ii} in
Theorem~\ref{t:1} is satisfied. On the other hand, since 
\cite[Theorem~5.10]{Ccma01} asserts that
$\ran\nabla f=\intdom f^*$,
property \ref{p:2i} above implies property
\ref{p:3i} in Proposition~\ref{p:3}, while 
property \ref{p:2ii} above implies property
\ref{p:3vii} in Proposition~\ref{p:3} since
\cite[Theorem~5.9(ii)]{Ccma01} asserts that
$\dom\partial f^*=\dom\nabla f^*=\intdom f^*$, while
\cite[Proposition~32.42]{Zei90B} asserts that $W$ is $3^*$
monotone. Altogether, the conclusion follows from
Theorem~\ref{t:1} and \eqref{e:B}.
\end{proof}

\begin{remark}
A sufficient condition for the property 
$(\intdom f^*)+\gamma\ran A=\XX^*$ in
Proposition~\ref{p:2}\ref{p:2ii} to hold is that $f$ be
supercoercive, as this property implies that $f^*$ is cofinite
\cite[Theorem~3.4]{Ccma01}.
\end{remark}

We now recover the existing results of 
Propositions~\ref{p:21}\ref{p:21i} and \ref{p:22}.

\begin{example}
\label{ex:1}
Suppose that $\XX$ is a Hilbert space. Then we retrieve 
Proposition~\ref{p:21}\ref{p:21i} by applying
Proposition~\ref{p:2}\ref{p:2iib} with the Legendre function
$f=\|\cdot\|^2/2$, which satisfies $f^*=f$,
$\intdom f=\intdom f^*=\XX$, and 
$D_f\colon\XX\times\XX\to\RR\colon (u,v)\mapsto\|u-v\|^2/2$. 
Alternatively, we can apply Example~\ref{ex:24} with $W=\Id$, 
which satisfies $\dom W=\XX$ and is $\phi$-uniformly monotone with
$\phi=|\cdot|^2$.
\end{example}

\begin{example}
\label{ex:22}
Using Proposition~\ref{p:12}, we retrieve Proposition~\ref{p:22} 
as a special case of Example~\ref{ex:23}, where $\dom W=\XX$ and
$\phi=\alpha|\cdot|^2$.
\end{example}

The next example is an illustration of Proposition~\ref{p:2}
for a maximally monotone operator which is not a subdifferential.

\begin{example}
\label{ex:311}
Let $\XX=\RR^2$ be the standard Euclidean plane, let
$\beta\in\RPP$, and let $\psi\colon\RR\to\RR$ be a Legendre
function with a $\beta$-Lipschitzian derivative. Set 
\begin{equation}
A\colon\XX\to\XX\colon(\xi_1,\xi_2)\mapsto
\brk1{\beta\xi_1-\psi'(\xi_1)-\xi_2,\xi_1+\beta\xi_2-\psi'(\xi_2)}
\end{equation}
and consider the Legendre function 
\begin{equation}
f\colon\XX\to\RR\colon(\xi_1,\xi_2)\mapsto\psi(\xi_1)+\psi(\xi_2).
\end{equation}
As observed in \cite[Remark~3.4]{Joca16}, $A$ is maximally 
monotone but it is not the subdifferential of a convex 
function. Now let $x=(\xi_1,\xi_2)\in\XX$ and
$u^*=(\mu^*_1,\mu^*_2)\in\XX$. Since
$\ran(\nabla f+A)=\XX$, we derive from 
Propositions~\ref{p:2} and \ref{p:12} that
\begin{align}
z=(\zeta_1,\zeta_2)
&=J_{A-u^*}^{\nabla f}x\nonumber\\
&=(\nabla f+A)^{-1}\brk1{\nabla f(x)+u^*}
\nonumber\\
&=\brk3{\dfrac{\beta\brk1{\psi'(\xi_1)+\mu^*_1}+
\psi'(\xi_2)+\mu^*_2}{1+\beta^2}\:,\:
\dfrac{\beta\brk1{\psi'(\xi_2)+\mu^*_2}-\psi'(\xi_1)
-\mu^*_1}{1+\beta^2}}
\end{align}
is well defined and that 
$D_f(x,z)+D_f(z,x)=
(\xi_1-\zeta_1)\brk1{\psi'(\xi_1)-\psi'(\zeta_1)}
+(\xi_2-\zeta_2)\brk1{\psi'(\xi_2)-\psi'(\zeta_2)}$.
Therefore, Proposition~\ref{p:2} yields 
\begin{equation}
H_A(x,u^*)\geq
{(\xi_1-\zeta_1)\brk1{\psi'(\xi_1)-\psi'(\zeta_1)}
+(\xi_2-\zeta_2)\brk1{\psi'(\xi_2)-\psi'(\zeta_2)}}.
\end{equation}
Let us note that the lower bound of
Proposition~\ref{p:21}\ref{p:21i} on the Haraux function
would be harder to compute. 
\end{example}

Next, we consider an application to the asymptotic behavior of a
family of set-valued operators in terms of the warped resolvents of
Definition~\ref{d:wr}.

\begin{proposition}
\label{p:25}
Let $\mathfrak{A}=\menge{A_t\colon\XX\to2^{\XX^*}}{t\in\RP}$ be 
a family of operators, let $\emp\neq\DD\subset\XX$, 
let $W\colon\DD\to\XX^*$ be $\phi$-uniformly monotone, and 
let $\gamma\in\RPP$. Suppose that there exists an operator 
$A\colon\XX\to2^{\XX^*}$ such that 
\begin{equation}
\label{e:25}
\brk1{\forall(x,u^*)\in\gra A}\quad \lim_{t\to\pinf} 
H_{A_t}(x,u^*)=0.
\end{equation}
Additionally, suppose that, for every $B\in\mathfrak{A}\cup\{A\}$,
$\DD\cap\dom B\neq\emp$ and that one of properties
\ref{p:3ii}--\ref{p:3viii} in Proposition~\ref{p:3} is satisfied,
together with one of properties \ref{t:1i}--\ref{t:1iii} in 
Theorem~\ref{t:1}. Then the following hold:
\begin{enumerate}
\item 
\label{p:25i}
Let $y^*\in\XX^*$. Then 
$(W+\gamma A_t)^{-1}y^*\to (W+\gamma A)^{-1}y^*$.
\item
\label{p:25ii}
Let $y\in\DD$. Then $J_{\gamma A_t}^Wy\to J_{\gamma A}^Wy$.
\item
\label{p:25iii}
\emph{\cite[Proposition~2.1]{Atto18}}
Suppose that $\XX$ is Hilbertian, that
$W=\Id$, and that the operators 
$(A_t)_{t\in\RP}$  and $A$ are maximally monotone. 
Then $(A_t)_{t\in\RP}$ graph-converges to $A$. 
\end{enumerate}
\end{proposition}
\begin{proof}
Set $x=(W+\gamma A)^{-1}y^*$, which is well defined since 
$W+\gamma A$ is surjective and injective. Note that $y^*\in
Wx+\gamma Ax$ and set $u^*=\gamma^{-1}(y^*-Wx)$. Then 
$(x,u^*)\in\gra A$ and, in view of Proposition~\ref{p:12}, 
Theorem~\ref{t:1} asserts that, for every $t\in\RP$,
\begin{align}
\label{e:pll2}
H_{A_t}\brk{x,u^*}
&\geq\dfrac{1}{\gamma}\pair1{x-J_{\gamma(A_t-u^*)}^Wx}
{Wx-W\brk1{J_{\gamma(A_t-u^*)}^Wx}}
\nonumber\\
&=\dfrac{1}{\gamma}\pair1{x-(W+\gamma A_t)^{-1}
\brk{Wx+\gamma u^*}}{Wx-W\brk1{(W+\gamma A_t)^{-1}
\brk{Wx+\gamma u^*}}}
\nonumber\\
&\geq\dfrac{1}{\gamma}
\phi\brk3{\left\|x-(W+\gamma A_t)^{-1}\brk3{Wx+\gamma 
\brk2{\dfrac{y^*-Wx}{\gamma}}}\right\|}
\nonumber\\
&=\dfrac{1}{\gamma}\phi\brk2{
\bigl\|(W+\gamma A)^{-1}y^*-(W+\gamma A_t)^{-1}y^*\bigr\|}.
\end{align}

\ref{p:25i}: This follows from \eqref{e:25} and \eqref{e:pll2}.

\ref{p:25ii}: 
Set $y^*=Wy$ in \eqref{e:pll2}, and invoke \eqref{e:25} and
Definition~\ref{d:wr}. 

\ref{p:25iii}:
A direct consequence of \ref{p:25i} and
\cite[Proposition~3.60]{Atto84}.
\end{proof}

Outside of Hilbert spaces, the lower bounds on the Haraux function
produced in this section via warped resolvents are different from
those produced in Section~\ref{sec:3} via metric resolvents. In
particular, the lower bounds of Section~\ref{sec:3} involve the
distance to the graph of the operator. We show that it is possible
to obtain such bounds with warped resolvents based on a certain
type of kernel $W$. For this purpose, we introduce the following
property, which implies the strong monotonicity of both $W$ and its
inverse.

\begin{definition}
\label{d:10}
Let $\emp\neq\DD\subset\XX$, let $W\colon\DD\to\XX^*$, 
let $\alpha\in\RPP$, and let $\beta\in\RPP$. Then $W$ is 
\emph{$(\alpha,\beta)$-jointly strongly monotone} if 
\begin{equation}
\label{e:100}
(\forall x\in\DD)(\forall y\in\DD)
\quad\pair{x-y}{Wx-Wy}\geq\dfrac{\alpha}{2}\|x-y\|^2
+\dfrac{\beta}{2}\|Wx-Wy\|^2.
\end{equation}
\end{definition}

\begin{example}
\label{ex:31}
Let $\emp\neq\DD\subset\XX$, let $W\colon\DD\to\XX^*$, let 
$\alpha\in\RPP$, and let $\beta\in\RPP$. Then the following hold:
\begin{enumerate}
\item
\label{ex:31i} 
Suppose that $W$ is $\alpha$-strongly monotone. Then the 
following are satisfied:
\begin{enumerate}
\item
\label{ex:31ia}
Suppose that $W$ is $\beta$-cocoercive, i.e., 
\begin{equation}
(\forall x\in\DD)(\forall y\in\DD)\quad 
\pair{x-y}{Wx-Wy}\geq\beta\|Wx-Wy\|^2.
\end{equation}
Then $W$ is $(\alpha,\beta)$-jointly strongly monotone.
\item
\label{ex:31ib}
Suppose that $W$ is $\beta$-Lipschitzian. Then $W$ is
$(\alpha,\alpha/\beta^2)$-jointly strongly monotone.
\item
\label{ex:31ic}
Suppose that $W$ is nonexpansive. Then $W$ is 
$(\alpha,\alpha)$-jointly strongly monotone.
\end{enumerate}
\item
\label{ex:31ii} 
Let $f\colon\XX\to\RR$ be an $\alpha$-strongly convex,
Fr\'echet differentiable function. Suppose that 
$\nabla f$ is $\beta^{-1}$-Lipschitzian and that $W=\nabla f$. Then 
$\nabla f$ is $(\alpha,\beta)$-jointly strongly monotone.
\end{enumerate}
\end{example}
\begin{proof}
\ref{ex:31i}
Let $\{x,y\}\subset\DD$. Since $W$ is $\alpha$-strongly monotone,
we have $\pair{x-y}{Wx-Wy}\geq\alpha\|x-y\|^2$.

\ref{ex:31ia}:
Since $W$ is $\beta$-cocoercive, we have 
$\pair{x-y}{Wx-Wy}\geq\beta\|Wx-Wy\|^2$. Therefore, 
\begin{equation}
\pair{x-y}{Wx-Wy}\geq\max\{\alpha\|x-y\|^2,
\beta\|Wx-Wy\|^2\}\geq\dfrac{\alpha}{2}\|x-y\|^2
+\dfrac{\beta}{2}\|Wx-Wy\|^2.
\end{equation}

\ref{ex:31ib}:
Since $W$ is $\beta$-Lipschitzian, we get
\begin{equation}
\pair{x-y}{Wx-Wy}\geq
\dfrac{\alpha}{2}\|x-y\|^2+\dfrac{\alpha}{2}\|x-y\|^2
\geq\dfrac{\alpha}{2}\|x-y\|^2+\dfrac{\alpha}{2\beta^2}
\|Wx-Wy\|^2.
\end{equation}

\ref{ex:31ic}: Take $\beta=1$ in \ref{ex:31ib}.

\ref{ex:31ii}: Since $f$ is $\alpha$-strongly convex, 
$\nabla f$ is $\alpha$-strongly monotone. Additionally,
$\nabla f$ is $\beta$-cocoercive by the Baillon--Haddad 
theorem \cite[Corollaire~10]{Bail77}. We conclude by invoking 
\ref{ex:31ia}.
\end{proof}
Joint strong monotonicity allows us to derive from
Proposition~\ref{p:6} lower bounds similar to those of
Proposition~\ref{p:29}.

\begin{proposition}
\label{p:30}
Let $A\colon\XX\to2^{\XX^*}$, let $\emp\neq\DD\subset\XX$, 
let $W\colon\DD\to\XX^*$, 
let $x\in\DD$, let $u^*\in\XX^*$, let $\gamma\in\RPP$, 
let $\alpha\in\RPP$, and let $\beta\in\RPP$. 
Suppose that $z\in J^W_{\gamma(A-u^*)}x$ and that 
$W$ is $(\alpha,\beta)$-jointly strongly monotone. 
Then, using Notation~\ref{n:1},
\begin{equation}
H_A(x,u^*)\geq\dfrac{\pair{x-z}{Wx-Wz}}{\gamma}
\geq\dfrac{\min\{\alpha,\beta\}}{2}d^2_{\gra A,\gamma}(x,u^*).
\end{equation}
\end{proposition}
\begin{proof}
Proceeding as in \eqref{e:51}, we observe that 
$(z,(Wx-Wz)/\gamma+u^*)\in\gra A$. It therefore follows from 
Proposition~\ref{p:6}\ref{p:6iv}, \eqref{e:100}, and 
Notation~\ref{n:1} that 
\begin{align}
H_A(x,u^*)&\geq\dfrac{\pair{x-z}{Wx-Wz}}{\gamma}
\nonumber\\
&\geq\dfrac{1}{\gamma}
\brk3{\dfrac{\alpha}{2}\|x-z\|^2+\dfrac{\beta}{2}
\|Wx-Wz\|^2}
\nonumber\\
&\geq\dfrac{\min\{\alpha,\beta\}}{2}
\brk3{\dfrac{\|x-z\|^2}{\gamma}+
\gamma\left\|\dfrac{Wx-Wz}{\gamma}+u^*-u^*\right\|^2}
\nonumber\\
&\geq\dfrac{\min\{\alpha,\beta\}}{2}d^2_{\gra A,\gamma}(x,u^*),
\end{align}
as claimed.
\end{proof}

\begin{remark}
When $\XX$ is a Hilbert space, we deduce 
Proposition~\ref{p:29} from Proposition~\ref{p:30} applied with 
the kernel $W=\Id$, which is $(1,1)$-jointly strongly monotone. 
\end{remark}

\begin{theorem}
\label{t:10}
Let $A\colon\XX\to 2^{\XX^*}$, let $\emp\neq\DD\subset\XX$, let 
$W\colon\DD\to\XX^*$, let $x\in\DD$, let $u^*\in\XX^*$, let
$\gamma\in\RPP$, let $\alpha\in\RPP$, and let $\beta\in\RPP$. 
Suppose that $\DD\cap\dom A\neq\emp$, that one
of properties \ref{p:3i}--\ref{p:3viii} in Proposition~\ref{p:3}
is satisfied, together with one of properties
\ref{t:1i}--\ref{t:1iii} in Theorem~\ref{t:1}, and that $W$ is 
$(\alpha,\beta)$-jointly strongly monotone. Set 
$z=J^W_{\gamma(A-u^*)}x$. Then, 
using Notation~\ref{n:1},  
\begin{equation}
H_A(x,u^*)\geq\dfrac{\pair{x-z}{Wx-Wz}}{\gamma}
\geq\dfrac{\min\{\alpha,\beta\}}{2}d^2_{\gra A,\gamma}(x,u^*).
\end{equation}
\end{theorem}
\begin{proof}
This follows from Theorem~\ref{t:1} and Proposition~\ref{p:30}.
\end{proof}

\section{Lower bounds on the Fenchel--Young function
based on warped proximity operators} 
\label{sec:6} 
Following the pattern adopted in Section~\ref{sec:4}, we derive
from the results of Section~\ref{sec:5} lower bounds on the
Fenchel--Young function of \eqref{e:2}. Comparisons with existing
bounds are made in several examples. To this end, we need the
following extension of \cite[Example~3.1]{Jmaa20} to set-valued
kernels.

\begin{definition}
\label{d:wpx}
Let $\varphi\colon\XX\to\RX$ be proper and let
$K\colon\XX\to 2^{\XX^*}$. Then 
the \emph{warped proximity operator} of $A$ with kernel $K$ is
$\prox_\varphi^K=J_{\partial\varphi}^K=
(K+\partial\varphi)^{-1}\circ K$. 
\end{definition}

Here is a consequence of Lemma~\ref{l:3}, Proposition~\ref{p:6}, 
and Definition~\ref{d:wpx}.

\begin{proposition}
\label{p:9}
Let $\varphi\colon\XX\to\RX$ be proper, let
$W\colon\XX\to 2^{\XX^*}$, let $u^*\in\XX^*$, and let
$\gamma\in\RPP$. Suppose that $x\in\dom W$ and that
$z\in\prox^W_{\gamma(\varphi-u^*)}x$. Then the following hold:
\begin{enumerate}
\item
\label{p:9i}
There exist $x^*\in Wx$ and $z^*\in Wz$ such that 
$L_\varphi(x,u^*)\geq{\pair{x-z}{x^*-z^*}}/{\gamma}$.
\item
\label{p:9ii}
Suppose that $W$ is $\phi$-uniformly monotone. Then 
$L_\varphi(x,u^*)\geq{\phi(\|x-z\|)}/{\gamma}$.
\item
\label{p:9iii}
Suppose that $W$ is $\alpha$-strongly monotone. Then 
$L_\varphi(x,u^*)\geq{\alpha\|x-z\|^2}/{\gamma}$.
\item
\label{p:9iv}
Suppose that $W$ is at most single-valued. Then
$L_\varphi(x,u^*)\geq{\pair{x-z}{Wx-Wz}}/{\gamma}$.
\item
\label{p:9v}
Let $f\in\Gamma_0(\XX)$ be Gateaux differentiable on 
$\dom\nabla f=\intdom f$ and suppose that $W=\nabla f$. Then 
\begin{equation}
\label{e:t6}
L_\varphi(x,u^*)\geq\dfrac{D_f(x,z)+D_f(z,x)}{\gamma}.
\end{equation}
\end{enumerate}
\end{proposition}

\begin{remark}
We can also derive Proposition~\ref{p:9} directly, without invoking
Proposition~\ref{p:6}. Indeed, by Definition~\ref{d:wpx}, 
since $z\in\prox^W_{\gamma(\varphi-u^*)}x$, 
there exists $x^*\in Wx$ such that 
$x^*\in Wz+\partial(\gamma(\varphi-u^*))(z)=
Wz+\gamma\partial\varphi(z)-\gamma u^*$. Hence,
there exists $z^*\in Wz$ such that 
$\gamma^{-1}(x^*-z^*)+u^*\in\partial\varphi(z)$.
In turn, \eqref{e:subdiff} yields
\begin{equation}
(\forall w\in\XX)\quad\pair3{w-z}{\dfrac{x^*-z^*}{\gamma}+u^*}
+\varphi(z)\leq\varphi(w).
\end{equation}
Thus, $\pair{x-z}{x^*-z^*}/\gamma+\pair{x-z}{u^*}
+\varphi(z)\leq\varphi(x)$ and, since $z\in\dom\varphi$, we
conclude that
\begin{equation}
\dfrac{\pair{x-z}{x^*-z^*}}{\gamma}\leq\varphi(x)-\pair{x}{u^*}
+\pair{z}{u^*}-\varphi(z)
\leq\varphi(x)-\pair{x}{u^*}+\varphi^*(u^*)
=L_\varphi(x,u^*).
\end{equation}
This shows \ref{p:9i} in Proposition~\ref{p:9}. Items 
\ref{p:9ii}--\ref{p:9v} in Proposition~\ref{p:9} then follow as in
Proposition~\ref{p:6}.
\end{remark}

Next, we characterize the situation in which $z$ 
in Proposition~\ref{p:9}\ref{p:9iv} is equal to $x$.

\begin{proposition}
\label{p:8}
In the setting of Proposition~\ref{p:9}\ref{p:9iv}, consider the
following statements:
\begin{enumerate}[label={\normalfont[\alph*]}]
\item
\label{p:8a}
$x\in\prox^W_{\gamma(\varphi-u^*)}x$. 
\item
\label{p:8b}
$u^*\in\partial\varphi(x)$.
\item
\label{p:8c}
$L_\varphi(x,u^*)=0$.
\item
\label{p:8d}
$H_{\partial\varphi}(x,u^*)=0$.
\end{enumerate}
Then the following hold:
\begin{enumerate}
\item
\label{p:8i}
\ref{p:8a}$\Leftrightarrow$\ref{p:8b}$\Leftrightarrow$\ref{p:8c}%
$\Rightarrow$\ref{p:8d}.
\item
\label{p:8iii}
Suppose that $\varphi\in\Gamma_0(\XX)$. Then 
\ref{p:8c}$\Leftrightarrow$\ref{p:8d}.
\end{enumerate}
\end{proposition}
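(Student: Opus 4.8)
The plan is to reduce the whole statement to facts already in hand: the equivalences for the Haraux function collected in Proposition~\ref{p:7}, the Fenchel--Young equality condition \eqref{e:6}, and the inequality $L_\varphi\ge H_{\partial\varphi}$ of Lemma~\ref{l:3}. For part~\ref{p:8i} I would establish the three arrows \ref{p:8a}$\Leftrightarrow$\ref{p:8b}, \ref{p:8b}$\Leftrightarrow$\ref{p:8c}, and \ref{p:8b}$\Rightarrow$\ref{p:8d} independently; for part~\ref{p:8iii} I would supply the single missing converse \ref{p:8d}$\Rightarrow$\ref{p:8c} under the convexity hypothesis.

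For \ref{p:8a}$\Leftrightarrow$\ref{p:8b} I would copy the computation of Proposition~\ref{p:7}\ref{p:7i}: since $W$ is at most single-valued and $x\in\dom W$, the value $Wx$ is a single functional, so
\[
x\in(W+\gamma\partial\varphi)^{-1}(Wx+\gamma u^*)
\;\Leftrightarrow\;
Wx+\gamma u^*\in Wx+\gamma\,\partial\varphi(x)
\;\Leftrightarrow\;
u^*\in\partial\varphi(x),
\]
the last step canceling $Wx$ and dividing by $\gamma>0$. The equivalence \ref{p:8b}$\Leftrightarrow$\ref{p:8c} is exactly the Fenchel--Young equality characterization recorded in \eqref{e:6}, which holds for any proper $\varphi$. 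For \ref{p:8b}$\Rightarrow$\ref{p:8d} I would note that $\partial\varphi$ is monotone for every proper $\varphi$ (the two-point subgradient inequalities need no convexity) and then apply Proposition~\ref{p:7}\ref{p:7ii} with $A=\partial\varphi$: from $(x,u^*)\in\gra\partial\varphi$ it returns $H_{\partial\varphi}(x,u^*)=0$. A self-contained alternative is Lemma~\ref{l:3}, which gives $H_{\partial\varphi}(x,u^*)\le L_\varphi(x,u^*)=0$, together with the trivial lower bound $H_{\partial\varphi}(x,u^*)\ge\pair{x-x}{u^*-u^*}=0$ obtained by testing the supremum in \eqref{e:1} at $(y,y^*)=(x,u^*)\in\gra\partial\varphi$.

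For part~\ref{p:8iii} I would invoke that $\varphi\in\Gamma_0(\XX)$ makes $\partial\varphi$ maximally monotone (Rockafellar's theorem), so that Proposition~\ref{p:7}\ref{p:7iii}, applied with $A=\partial\varphi$, strengthens the preceding one-way arrow to the equivalence $(x,u^*)\in\gra\partial\varphi\Leftrightarrow H_{\partial\varphi}(x,u^*)=0$, i.e. \ref{p:8b}$\Leftrightarrow$\ref{p:8d}. Chaining with \ref{p:8b}$\Leftrightarrow$\ref{p:8c} from part~\ref{p:8i} then yields \ref{p:8c}$\Leftrightarrow$\ref{p:8d}.

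No new estimate is involved, so there is no computational obstacle; the point to watch --- and the reason the statement is split into two parts --- is exactly what convexity supplies. Part~\ref{p:8i} must remain valid for merely proper $\varphi$, which is why it is essential that monotonicity of $\partial\varphi$ (all that \ref{p:8b}$\Rightarrow$\ref{p:8d} uses) requires no convexity, whereas the converse \ref{p:8d}$\Rightarrow$\ref{p:8c} genuinely needs the maximal monotonicity of $\partial\varphi$, and hence the hypothesis $\varphi\in\Gamma_0(\XX)$ of part~\ref{p:8iii}.
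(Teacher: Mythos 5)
Your proposal is correct and follows essentially the same route as the paper: the paper's proof also reduces everything to the Fenchel--Young equality characterization \eqref{e:6} together with Proposition~\ref{p:7} applied to $A=\partial\varphi$, noting that $\partial\varphi$ is monotone for proper $\varphi$ and maximally monotone when $\varphi\in\Gamma_0(\XX)$. Your write-up merely makes the individual arrows explicit (and offers Lemma~\ref{l:3} as an optional alternative for \ref{p:8b}$\Rightarrow$\ref{p:8d}), which is consistent with the paper's two-line argument.
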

\begin{proof}
We recall that $\partial\varphi$ is monotone, and maximally so if
$\varphi\in\Gamma_0(\XX)$. The claims therefore follows from 
\eqref{e:6} and Proposition~\ref{p:7}.
\end{proof}

Let $\varphi\colon\XX\to\RX$ be proper. Setting $A=\partial\varphi$
in Theorem~\ref{t:1} furnishes a first set of conditions under
which $z$ exists in Proposition~\ref{p:9} and is uniquely defined
as $z=\prox^W_{\gamma(\varphi-u^*)}x$. Below, we
refine some of these conditions and add new ones.

\begin{theorem}
\label{t:2}
Let $\varphi\colon\XX\to\RX$ be proper, let 
$\emp\neq\DD\subset\XX$, let $W\colon\DD\to\XX^*$, let $x\in\DD$,
$u^*\in\XX^*$, and let $\gamma\in\RPP$. Suppose that
$\DD\cap\dom\partial\varphi\neq\emp$ and that one of properties
\ref{t:2i}--\ref{t:2iii} below is satisfied, together with one of
properties \ref{t:2iv}--\ref{t:2viii}:
\begin{enumerate}
\item
\label{t:2i}
$W+\gamma\partial\varphi$ is injective.
\item
\label{t:2ii}
$W+\gamma\partial\varphi$ is strictly monotone.
\item
\label{t:2iii}
$W$ is uniformly monotone.
\item
\label{t:2iv}
$Wx+\gamma u^*\in\ran(W+\gamma\partial\varphi)$.
\item
\label{t:2x}
$W$ is monotone, $W+\gamma\partial\varphi$ is maximally monotone,
$\ran W+\gamma\ran\partial\varphi=\XX^*$, and one of the following
is satisfied:
\begin{enumerate}
\item
\label{t:2xa}
$W$ is $3^*$ monotone. 
\item
\label{t:2xc}
$\dom W\subset\dom\partial\varphi$.
\end{enumerate}
\item
\label{t:2xi}
$W=\partial f$, with $f\in\Gamma_0(\XX)$,
$\varphi\in\Gamma_0(\XX)$, 
the cone generated by $\dom f-\dom\varphi$ is a closed vector
subspace of $\XX$, and
$\dom\partial f^*+\gamma\dom\partial\varphi^*=\XX^*$.
\item
\label{t:2viii}
$W$ is monotone, $W+\gamma\partial\varphi$ is maximally monotone, 
and one of the following is satisfied:
\begin{enumerate}
\item
\label{t:2viiia}
$W$ is $3^*$ monotone and surjective.
\item
\label{t:2viiib}
$\partial\varphi$ is surjective.
\end{enumerate}
\end{enumerate}
Set $z=\prox^W_{\gamma(\varphi-u^*)}x$. Then 
$L_\varphi(x,u^*)\geq{\pair{x-z}{Wx-Wz}}/{\gamma}$.
\end{theorem}
\begin{proof}
We apply Theorem~\ref{t:1} with $A=\partial\varphi$, taking 
into account the fact that, by \cite[Proposition~32.42]{Zei90B}, 
\begin{equation}
\label{e:7}
A\;\text{is $3^*$ monotone}.
\end{equation}

\ref{t:2i}: Theorem~\ref{t:1}\ref{t:1i}.

\ref{t:2ii}: Theorem~\ref{t:1}\ref{t:1ii}.

\ref{t:2iii}: Theorem~\ref{t:1}\ref{t:1iii} and \eqref{e:7}.

\ref{t:2iv}: Proposition~\ref{p:3}\ref{p:3i}.

\ref{t:2x}: Proposition~\ref{p:3}\ref{p:3vii} and \eqref{e:7}.

\ref{t:2xi}: This follows from Proposition~\ref{p:3}\ref{p:3ix}
and \eqref{e:7}. Indeed, by the Attouch--Br\'ezis theorem
\cite[Theorem~18.2]{Simo08}, 
$W+\gamma A=\partial f+\gamma\partial\varphi=
\partial(f+\gamma\varphi)$. However, since 
$f+\gamma\varphi\in\Gamma_0(\XX)$, Rockafellar's theorem
\cite[Theorem~18.7]{Simo08} asserts that this operator 
is maximally monotone.

\ref{t:2viii}: Proposition~\ref{p:3}\ref{p:3viii} and 
\eqref{e:7}.
\end{proof}

We now focus on the case when $W$ is the gradient of a Legendre
function $f$. The warped proximity operator 
$\prox^{\nabla f}_{\varphi}$ becomes the Bregman proximity operator
of $\varphi$ studied in \cite[Section~3.4]{Sico03}.

\begin{proposition}
\label{p:5}
Let $\varphi\in\Gamma_0(\XX)$, let $f\in\Gamma_0(\XX)$ be a
Legendre function, let $x\in\intdom f$, let $u^*\in\XX^*$, and 
let $\gamma\in\RPP$. Suppose that 
$(\intdom f)\cap\dom\partial\varphi\neq\emp$ and that one of the
following holds:
\begin{enumerate}
\item
\label{p:5i}
$\gamma u^*+\intdom f^*\subset
\ran(\nabla f+\gamma\partial\varphi)$.
\item
\label{p:5ii}
$(\intdom f^*)+\gamma\dom\partial\varphi^*=\XX^*$.
\item
\label{p:5iii}
$f+\gamma\varphi$ is cofinite.
\item
\label{p:5iv}
$f+\gamma\varphi$ is supercoercive.
\item
\label{p:5v}
$\dom f\cap\dom\varphi$ is bounded.
\end{enumerate}
Set $z=\prox^{\nabla f}_{\gamma(\varphi-u^*)}x$. Then
\begin{equation}
\label{e:pll}
L_\varphi(x,u^*)\geq\dfrac{D_f(x,z)+D_f(z,x)}{\gamma}.
\end{equation}
\end{proposition}
\begin{proof}
\ref{p:5i}--\ref{p:5ii}: These follow from Proposition~\ref{p:2},
\eqref{e:7}, and Lemma~\ref{l:3}.

\ref{p:5iii}$\Rightarrow$\ref{p:5i}: We have 
$f+\gamma\varphi\in\Gamma_0(\XX)$, hence 
$(f+\gamma\varphi)^*\in\Gamma_0(\XX^*)$. Therefore, by
\cite[Corollary~47.7, Theorem~47.A(ii), Theorem~51.A(ii), and %
Theorem~47.B]{Zeid85} and \cite[Theorem~5.6]{Ccma01},
\begin{align}
\XX^*&=\intdom(f+\gamma\varphi)^*\nonumber\\
&\subset\dom\partial(f+\gamma\varphi)^*\nonumber\\
&=\ran\brk1{\partial(f+\gamma\varphi)^*}^{-1}\nonumber\\
&=\ran\partial(f+\gamma\varphi)\nonumber\\
&=\ran\brk{\partial f+\gamma\partial\varphi}\nonumber\\
&=\ran\brk{\nabla f+\gamma\partial\varphi},
\end{align}
which confirms that 
$\ran(\nabla f+\gamma\partial\varphi)=\XX^*$.

\ref{p:5iv}$\Rightarrow$\ref{p:5iii}: \cite[Theorem~3.4]{Ccma01}.

\ref{p:5v}$\Rightarrow$\ref{p:5iv}: Clear.
\end{proof}

\begin{example}
\label{ex:81}
When $\XX$ is a Hilbert space, we recover
Proposition~\ref{p:21}\ref{p:21ii} as a special case of
Proposition~\ref{p:5}\ref{p:5ii} with $f=\|\cdot\|^2/2$. It is also
a special case of Proposition~\ref{p:9}\ref{p:9iv} with $W=\Id$.
\end{example}

\begin{example}
\label{ex:11}
Let $\varphi\in\Gamma_0(\XX)$ be a Legendre function, 
let $x\in\intdom\varphi$, let $u^*\in\intdom\varphi^*$, and 
let $\gamma\in\RPP$. Then 
\begin{equation}
\label{e:11}
L_\varphi(x,u^*)\geq
\dfrac{\pair3{x-\nabla\varphi^*\brk2{(1+\gamma)^{-1}
\brk1{\nabla\varphi(x)+\gamma u^*}}}{\nabla\varphi(x)-u^*}}
{1+\gamma}.
\end{equation}
\end{example}
\begin{proof}
It follows from the results of \cite[Section~5]{Ccma01} that 
$\dom\partial\varphi^*=\intdom\varphi^*$ and 
$\nabla\varphi\colon\intdom\varphi\to\intdom\varphi^*$ is a
bijection with inverse $\nabla\varphi^*$. We establish the claim 
by setting $f=\varphi$ in Proposition~\ref{p:5}\ref{p:5i}. We 
first observe that
\begin{equation}
\gamma u^*+\intdom\varphi^*
\subset(\gamma+1)\intdom\varphi^*
=(1+\gamma)\dom\partial\varphi^*
=(1+\gamma)\ran\partial\varphi
=\ran\brk1{(1+\gamma)\partial\varphi}.
\end{equation}
As in Proposition~\ref{p:5}\ref{p:5i}, set 
$z=\prox^{\nabla\varphi}_{\gamma(\varphi-u^*)}x=
((1+\gamma)\nabla\varphi-\gamma u^*)^{-1}(\nabla\varphi(x))$.
Then $\nabla\varphi(x)+\gamma u^*=(1+\gamma)\nabla\varphi(z)$ and
therefore 
$z=\nabla\varphi^*((1+\gamma)^{-1}(\nabla\varphi(x)+\gamma u^*))$.
Thus, 
\begin{align}
\dfrac{\pair1{x-z}{\nabla\varphi(x)-\nabla\varphi(z)}}{\gamma}
&=\dfrac{\pair2{x-z}{\nabla\varphi(x)-(1+\gamma)^{-1}\brk1{\nabla 
\varphi(x)+\gamma u^*}}}{\gamma}\nonumber\\
&=\dfrac{\pair2{x-z}{\gamma\brk1{\nabla\varphi(x)-u^*}}}{
\gamma(1+\gamma)}\nonumber\\
&=\dfrac{\pair3{x-\nabla\varphi^*\brk2{(1+\gamma)^{-1}
\brk1{\nabla\varphi(x)+\gamma u^*}}}{\nabla\varphi(x)-u^*}}
{1+\gamma}
\end{align}
which, in view of \eqref{e:pll} and \eqref{e:B}, yields
\eqref{e:11}.
\end{proof}

\begin{example}
\label{ex:15}
Suppose that $\XX$ is a Hilbert space with scalar product
$\scal{\cdot}{\cdot}$, 
let $\psi\in\Gamma_0(\XX)$ be a Legendre function, 
let $\widetilde{\psi}\colon\XX\to\RR\colon x\mapsto\inf_{y\in\XX}
(\psi(y)+\|x-y\|^2/2)$ be its Moreau envelope, and
set $\varphi=\|\cdot\|^2/2+\psi$. 
Let $x\in\inte\dom\psi$, $u^*\in\XX$, and $\gamma\in\RPP$. 
Then
\begin{equation}
L_\varphi(x,u^*)=\dfrac{1}{2}\|x-u^*\|^2+\psi(x)-
\widetilde{\psi}(u^*).
\end{equation}
Further, Proposition~\ref{p:21}\ref{p:21ii} gives 
\begin{equation}
\label{e:8}
L_\varphi(x,u^*)\geq
\dfrac{\left\|x-\prox_{\gamma(1+\gamma)^{-1}\psi}
\brk3{\dfrac{x+\gamma u^*}{1+\gamma}}\right\|^2}{\gamma},
\end{equation}
while Example~\ref{ex:11} gives
\begin{equation}
\label{e:91}
L_\varphi(x,u^*)\geq\dfrac{\scal{x-z}{x+\nabla\psi(x)-u^*}}
{1+\gamma},
\quad\text{where}\quad
z=\prox_\psi\brk3{\dfrac{x+\nabla\psi(x)+\gamma u^*}
{1+\gamma}}. 
\end{equation}
\end{example}
\begin{proof}
By \cite[Example~13.4]{Livre1}, 
$\varphi^*=\|\cdot\|^2/2-\widetilde{\psi}$. Therefore, 
\begin{equation}
\label{e:f8}
L_\varphi(x,u^*)=\dfrac{1}{2}\|u^*\|^2+\dfrac{1}{2}\|x\|^2
-\scal{x}{u^*}+\psi(x)-\widetilde{\psi}(u^*)
=\dfrac{1}{2}\|x-u^*\|^2+\psi(x)-\widetilde{\psi}(u^*).
\end{equation}
We derive \eqref{e:8} from Proposition~\ref{p:21}\ref{p:21ii} 
and \cite[Proposition~24.8(i)]{Livre1}. We also note that 
$\varphi$ is a Legendre function. Moreover, $\dom\varphi^*=\XX$ and 
$\nabla\varphi^*=\prox_\psi$ \cite[Proposition~12.30]{Livre1}. 
In turn, we derive \eqref{e:91} from Example~\ref{ex:11}. 
\end{proof}

\begin{remark}
\label{r:410}
In Example~\ref{ex:15}, set $\psi=\|\cdot\|^2/2$ and 
$\gamma=1$. Then $L_\varphi(x,u^*)={\|2x-u^*\|^2}/{4}$.
Furthermore, Proposition~\ref{p:21}\ref{p:21ii} yields 
\begin{equation}
L_\varphi(x,u^*)\geq
\left\|x-\prox_{\|\cdot\|^2/4}\brk3{\dfrac{x+u^*}{2}}\right\|^2
=\dfrac{\|2x-u^*\|^2}{9}.
\end{equation}
On the other hand, since $\nabla\psi=\Id$ in \eqref{e:91}, we have
\begin{equation}
z=\prox_\psi\brk3{\dfrac{2x+u^*}{2}}
=\dfrac{2x+u^*}{4}
\end{equation}
and therefore the minorization of Example~\ref{ex:11} becomes
\begin{equation}
L_\varphi(x,u^*)\geq\dfrac{\scal{x-z}{2x-u^*}}{2}
=\dfrac{\|2x-u^*\|^2}{8}.
\end{equation}
The lower bound produced by Example~\ref{ex:11} is therefore 
sharper than that of Proposition~\ref{p:21}\ref{p:21ii}.
\end{remark}

Our lower bounds on the Haraux and Fenchel--Young
functions are new, even in Euclidean spaces. Here are two examples
in which they are compared to the lower bound of
Proposition~\ref{p:21}\ref{p:21ii}. 

\begin{example}
\label{ex:3}
Let $\XX$ be the standard Euclidean space $\RR^N$ and 
$I=\{1,\dots,N\}$. Consider the negative Burg entropy function
\begin{equation}
\varphi\colon\XX\to\RX\colon x=(\xi_i)_{i\in I}\mapsto
\begin{cases}
-\displaystyle\sum_{i\in I}\ln(\xi_i),&\text{if}\;\;
x\in\RPP^N;\\
\pinf,&\text{otherwise.}
\end{cases}
\end{equation}
Let $x\in\RPP^N$, $u^*=(\mu^*_i)_{i\in I}\in\RMM^N$, and
$\gamma\in\RPP$. Then 
\begin{equation}
L_\varphi(x,u^*)=
-N-\sum_{i\in I}\brk1{\ln(-\xi_i\mu^*_i)+\xi_i\mu^*_i}
\end{equation}
and Example~\ref{ex:11} gives
\begin{equation}
L_\varphi(x,u^*)\geq\sum_{i\in I}
\dfrac{\gamma|1+\xi_i\mu^*_i|^2}{(1+\gamma)(1-\gamma\xi_i\mu^*_i)}.
\end{equation}
Let us observe that, if $N=1$, this lower bound becomes
${\gamma|1+\xi_1\mu^*_1|^2}/((1+\gamma)(1-\gamma\xi_1\mu^*_1))$.
In comparison, we derive from \cite[Example~24.40]{Livre1} 
that the lower bound given by
Proposition~\ref{p:21}\ref{p:21ii} is 
\begin{equation}
\dfrac{\left|\xi_1-\gamma\mu^*_1-\sqrt{|\xi_1+\gamma\mu^*_1|^2
+4\gamma}\right|^2}{4\gamma}.
\end{equation}
We graph these two bounds in Figure~\ref{f:1} for different 
values of $\gamma$, which shows that the bound provided by 
Proposition~\ref{p:21}\ref{p:21ii} is not the best.
\end{example}
\begin{proof}
By \cite[Example~13.2(iii) and Proposition~13.30]{Livre1}, 
$\varphi^*(u^*)=-N-\sum_{i\in I}\ln(-\mu^*_i)$. Thus,
\begin{equation}
L_\varphi(x,u^*)=
-N-\sum_{i\in I}\brk1{\ln(-\xi_i\mu^*_i)+\xi_i\mu^*_i}.
\end{equation}
Remark that $\varphi$ is a Legendre function with
$\dom\nabla\varphi=\inte\dom\varphi=\RPP^N$ and
\begin{equation}
\brk1{\forall y=(\eta_i)_{i\in I}\in\RPP^N}\quad\nabla
\varphi(y)=\brk1{-{1}/{\eta_i}}_{i\in I}.
\end{equation}
Further, it is clear that $u^*\in\RMM^N=\inte\dom\varphi^*$.
By Example~\ref{ex:11}, 
\begin{equation}
z=(\zeta_i)_{i\in I}=
\nabla\varphi^*\brk2{(1+\gamma)^{-1}\brk1{\nabla\varphi(x)+
\gamma u^*}}
\end{equation}
is well defined and 
$\nabla\varphi(x)+\gamma u^*=(1+\gamma)\nabla\varphi(z)$, which
yields $(\forall i\in I)$ 
$(1-\gamma\xi_i\mu^*_i)/\xi_i=(1+\gamma)/\zeta_i$.
Hence,
\begin{equation}
z=\brk3{\dfrac{(1+\gamma)\xi_i}{1-\gamma\xi_i
\mu^*_i}}_{i\in I}\in\RPP^N.
\end{equation}
We thus derive from \eqref{e:B} that
\begin{equation}
D_\varphi(x,z)+D_\varphi(z,x)
=\sum_{i\in I}\brk3{-2+\dfrac{\xi_i}{\zeta_i}+
\dfrac{\zeta_i}{\xi_i}}
=\sum_{i\in I}
\dfrac{\gamma|1+\xi_i\mu^*_i|^2}{(1+\gamma)(1-\gamma\xi_i\mu^*_i)},
\end{equation}
which concludes the proof.
\end{proof}

\begin{figure}[h]
\centering
\begin{subfigure}{0.45\textwidth}
\centering
\includegraphics[width=\linewidth]{./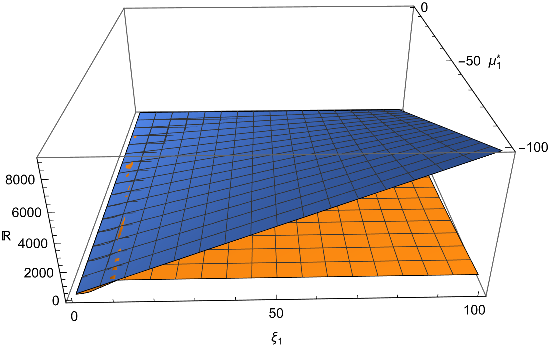}
\caption{Example~\ref{ex:3}, $\gamma=0.1$.}
\label{f:1a}
\end{subfigure}
\begin{subfigure}{0.45\textwidth}
\centering
~~~~~~\includegraphics[width=\linewidth]{./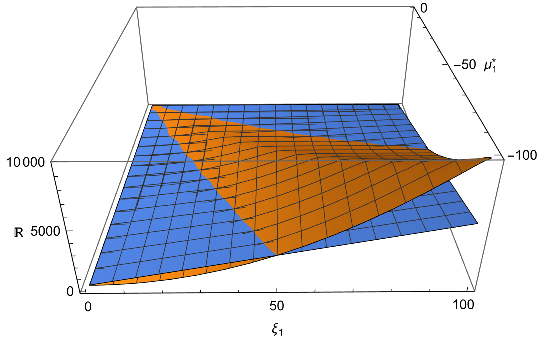}
\caption{Example~\ref{ex:3}, $\gamma=1$.}
\label{f:1b}
\end{subfigure}
\vspace{4mm}
\begin{subfigure}{0.45\textwidth}
\vspace{11mm}
\centering
\includegraphics[width=\linewidth]{./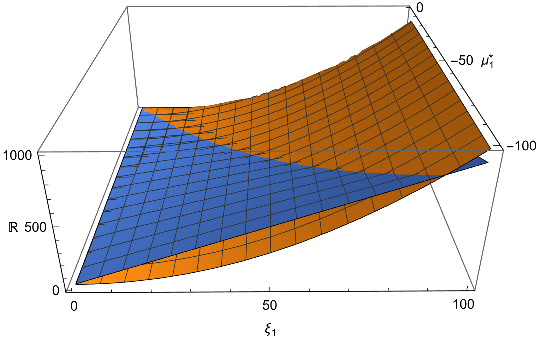}
\caption{Example~\ref{ex:3}, $\gamma=10$.}
\label{f:1c}
\end{subfigure}
\begin{subfigure}{0.45\textwidth}
\vspace{9mm}
\centering
~~~~~~\includegraphics[width=\linewidth]{./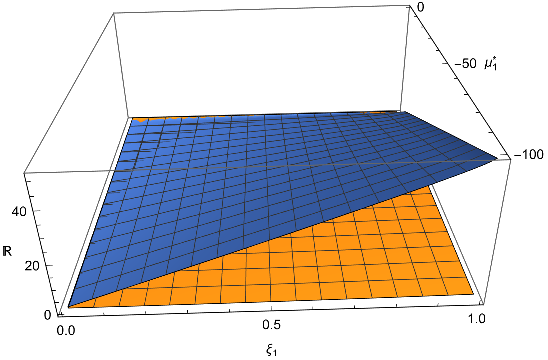}
\caption{Example~\ref{ex:4}, $\gamma=1$.}
\label{f:1d}
\end{subfigure}
\caption{Lower bounds of Examples~\ref{ex:3} and \ref{ex:4}. 
In blue, the lower bound of Proposition~\ref{p:5} and, in orange,
the lower bound of Proposition~\ref{p:21}\ref{p:21ii}.}
\label{f:1}
\end{figure}

\begin{example}
\label{ex:4}
Let $\XX$ be the standard Euclidean space $\RR^N$ and 
$I=\{1,\dots,N\}$. Set
\begin{multline}
\brk1{\forall x=(\xi_i)_{i\in I}\in\XX}\quad
I_1(x)=\menge{i\in I}{\xi_i\in\RP},\;\; 
I_2(x)=\menge{i\in I}{\xi_i\in\RPP},\\
I_3(x)=\menge{i\in I}{\xi_i\in[0,1]},\;\;\text{and}\;\;
I_4(x)=\menge{i\in I}{\xi_i\in\zeroun}.
\end{multline}
We consider the negative Boltzmann--Shannon entropy function
\begin{equation}
\varphi\colon\XX\to\RX\colon x\mapsto
\begin{cases}
\displaystyle\sum_{i\in I_2(x)}
\brk1{\xi_i\ln(\xi_i)-\xi_i},&\text{if}\;\;
I=I_1(x)\;\;\text{and}\;\;I_2(x)\neq\emp;\\
0,&\text{if}\;\;I=I_1(x)\smallsetminus I_2(x);\\
\pinf,&\text{otherwise,}
\end{cases}
\end{equation}
and let $f$ be the Fermi--Dirac entropy function
\begin{equation}
f\colon\XX\to\RX\colon x\mapsto
\begin{cases}
\displaystyle\sum_{i\in I_4(x)}
\brk1{\xi_i\ln(\xi_i)+(1-\xi_i)\ln(1-\xi_i)}
,&\text{if}\;\;I=I_3(x)\;\;\text{and}\;\;I_4(x)\neq\emp;\\
0,&\text{if}\;\;I=I_3(x)\smallsetminus I_4(x);\\
\pinf,&\text{otherwise.}
\end{cases}
\end{equation}
Let $x\in\zeroun^N$, $u^*=(\mu^*_i)_{i\in I}\in\XX$, and
$\gamma\in\RPP$. Set 
\begin{equation}
(\forall i\in I)\quad 
\zeta_i=-\dfrac{\xi_ie^{\gamma\mu^*_i}}{2(1-\xi_i)}
+\sqrt{\dfrac{\xi^2_ie^{2\gamma\mu^*_i}}{4|1-\xi_i|^2}
+\dfrac{\xi_ie^{\gamma\mu^*_i}}{1-\xi_i}}.
\end{equation}
Then 
\begin{equation}
L_\varphi(x,u^*)=
\sum_{i\in I}\brk1{\xi_i\ln(\xi_i)-\xi_i+e^{\mu^*_i}-\xi_i\mu^*_i}
\end{equation}
and Proposition~\ref{p:5}\ref{p:5v} gives
\begin{equation}
L_\varphi(x,u^*)
\geq\dfrac{1}{\gamma}\sum_{i\in I}(\xi_i-\zeta_i)
\ln\brk3{\dfrac{\xi_i(1-\zeta_i)}{\zeta_i(1-\xi_i)}}.
\end{equation}
Thus, if $N=1$ and $\gamma=1$, the above lower
bound is
\begin{equation}
\label{e:93}
(\xi_1-\zeta_1)
\ln\brk3{\dfrac{\xi_1(1-\zeta_1)}{\zeta_1(1-\xi_1)}}.
\end{equation}
By contrast, since 
$\prox_\varphi\colon\xi_1\mapsto\Lambda(e^{\xi_1})$, 
where $\Lambda$ is the Lambert W-function
\cite[Example~24.39]{Livre1}, the lower bound of 
Proposition~\ref{p:21}\ref{p:21ii} is
\begin{align}
\bigl|\xi_1-\Lambda\brk1{e^{\xi_1+\mu^*_1}}\bigr|^2.
\end{align}
We illustrate these bounds in Figure~\ref{f:1}, which 
shows that the bound given by Proposition~\ref{p:21}\ref{p:21ii} is
not the best. The bound \eqref{e:93} provided by
Proposition~\ref{p:5} is also easier to compute.
\end{example}
\begin{proof}
Using \cite[Example~13.2(v) and Proposition~13.30]{Livre1}, 
we obtain $\varphi^*(u^*)=\sum_{i\in I}e^{\mu^*_i}$ and hence
\begin{equation}
L_\varphi(x,u^*)=
\sum_{i\in I}\brk1{\xi_i\ln(\xi_i)-\xi_i+e^{\mu^*_i}
-\xi_i\mu^*_i}.
\end{equation}
Further, $f$ is a Legendre function 
\cite[Sections~5 and 6]{Baus97}, with
$\dom\nabla f=\inte\dom f=\zeroun^N$, where 
\begin{equation}
\brk2{\forall y=(\eta_i)_{i\in I}\in\zeroun^N}\quad\nabla
f(y)=\brk1{\ln(\eta_i)-\ln(1-\eta_i)}_{i\in I}.
\end{equation}
Additionally,
$(\inte\dom f)\cap\dom\partial\varphi=\zeroun^N\neq\emp$. Thus, 
by Proposition~\ref{p:5}\ref{p:5v} and \cite[Example~4.4]{Joca16}, 
$z=(\zeta_i)_{i\in I}$ exists and
\begin{align}
(\forall i\in I)\quad
\zeta_i&={-\dfrac{e^{\ln(\xi_i)-\ln(1-\xi_i)+\gamma\mu^*_i}}{2}
+\sqrt{\dfrac{e^{2\brk1{\ln(\xi_i)-\ln(1-\xi_i)+\gamma\mu^*_i}}}
{4}+e^{\ln(\xi_i)-\ln(1-\xi_i)+\gamma\mu^*_i}}}
\nonumber\\
&={-\dfrac{\xi_ie^{\gamma\mu^*_i}}{2(1-\xi_i)}
+\sqrt{\dfrac{\xi_i^2}{4|1-\xi_i|^2}e^{2\gamma\mu^*_i}
+\dfrac{\xi_i}{1-\xi_i}e^{\gamma\mu^*_i}}}.
\end{align}
Hence, 
\begin{equation}
D_f(x,z)+D_f(z,x)
=\sum_{i\in I}(\xi_i-\zeta_i)
\ln\brk3{\dfrac{\xi_i(1-\zeta_i)}{\zeta_i(1-\xi_i)}}
\end{equation}
and the conclusion follows.
\end{proof}

\section{Applications to monotone inclusions} 
\label{sec:7} 
As discussed in Section~\ref{sec:1}, finding lower bounds on the
Haraux function --- and hence on the Fenchel--Young function via
Lemma~\ref{l:3} --- is of both theoretical and practical interest.
Thus, applications to inverse problems in are discussed in
\cite{Andr25}, applications to the asymptotic properties of
families of set-valued operators in \cite{Atto18}, \cite{Peno06},
and Proposition~\ref{p:25}, applications to machine learning in
\cite{Blon20,Rako24}, applications to the strong Fitzpatrick
inequality in \cite{Bura25}, applications to convex analysis and
convex programming in \cite{Car23a}, and applications to optimal
transportation in \cite{Car23a} and \cite{Car23b}. In this section,
we propose to apply the bounds of Sections~\ref{sec:5} and
\ref{sec:6} to the area of composite monotone inclusion problems. 

Let $A\colon\XX\to 2^{\XX^*}$ be a maximally monotone operator. 
The Haraux function $H_A$ of \eqref{e:1} is defined on
the primal-dual space $\XX\times\XX^*$. We can employ it to induce
a primal-primal function on $\XX\times\XX$. To this end, let 
$B\colon\XX\to 2^{\XX^*}$ be maximally monotone and let 
$S\colon\dom B\to\XX^*\colon x\mapsto Sx\in Bx$ be a selection of
$B$. We associate with $H_A$ the function
\begin{equation}
\label{e:17}
G_{A,S}\colon\XX\times\XX\to\RPX\colon(x,y)\mapsto H_A(x,-Sy).
\end{equation}
This function can be used as a penalty function to detect whether a
point $(x,y)\in\XX\times\XX$ is a zero of the direct sum operator 
$A\oplus B$ since Proposition~\ref{p:7} yields
\begin{equation}
\label{e:18}
G_{A,S}(x,y)=0\quad\Leftrightarrow\quad -Sy\in Ax\quad
\Leftrightarrow\quad(x,y)\in\zer(A\oplus B).
\end{equation}
As $H_A$, and therefore $G_{A,S}$, can be hard to evaluate, our
results provide more tractable lower bounds to test the violation
of the constraint $(x,y)\in\zer(A\oplus B)$. We can further
specialize this construction to address the inclusion
$0\in Ax+Bx$, a generic model which covers a wide range of
applications; see \cite{Acnu24} and its bibliography. 
We introduce the primal function
\begin{equation}
\vartheta_{A,S}\colon\XX\to\RPX\colon x\mapsto G_{A,S}(x,x)
=H_A(x,-Sx)
\end{equation}
to gauge the membership of a point $x\in\XX$ in $\zer(A+B)$. 
Indeed, \eqref{e:18} yields
\begin{equation}
\label{e:74}
\vartheta_{A,S}(x)=0\quad\Leftrightarrow\quad -Sx\in Ax\quad
\Leftrightarrow\quad x\in\zer(A+B).
\end{equation}
To illustrate our lower bounds in this context, it is assumed
henceforth that $B\colon\XX\to\XX^*$ is single-valued. 

In the setting of Theorem~\ref{t:1}, let $x\in\DD$, set 
$u^*=-Bx$, and consider the kernel $K=W-\gamma B$. Then 
Proposition~\ref{p:12} yields
\begin{align}
\label{e:alain}
z
&=J_{\gamma(A-u^*)}^Wx\nonumber\\
&=(W+\gamma A)^{-1}(Wx+\gamma u^*)\nonumber\\
&=\brk1{W-\gamma B+\gamma (A+B)}^{-1}(Wx-\gamma Bx)\nonumber\\
&=J_{\gamma(A+B)}^Kx,
\end{align}
and the lower bound of Theorem~\ref{t:1} is therefore
\begin{equation}
\label{e:g1}
\vartheta_{A,B}(x)=H_A(x,-Bx)\geq
\dfrac{\pair1{x-J_{\gamma(A+B)}^Kx}
{Wx-W\brk1{J_{\gamma(A+B)}^Kx}}}{\gamma}.
\end{equation}

\begin{example}
\label{ex:g1}
In the setting of Example~\ref{ex:24}, \eqref{e:g1} yields
\begin{equation}
\label{e:g4}
\vartheta_{A,B}(x)
\geq\dfrac{\phi\brk2{\bigl\|x-J_{\gamma(A+B)}^Kx\bigr\|}}{\gamma},
\quad\text{where}\quad K=W-\gamma B.
\end{equation}
\end{example}

\begin{example}
\label{ex:g2}
In the setting of Proposition~\ref{p:2}, \eqref{e:g1} yields
\begin{equation}
\label{e:g5}
\vartheta_{A,B}(x)\geq\dfrac{D_f\brk2{x,J_{\gamma(A+B)}^Kx}+D_f
\brk2{J_{\gamma(A+B)}^Kx,x}}{\gamma},
\quad\text{where}\quad K=\nabla f-\gamma B.
\end{equation}
In this case,
$J_{\gamma(A+B)}^K=(\nabla f+\gamma A)^{-1}\circ
(\nabla f-\gamma B)$ is the Bregman forward-backward splitting
operator studied in \cite{Svva21}. 
\end{example}

\begin{example}
\label{ex:g3}
If $\XX$ is Hilbertian and $f=\|\cdot\|^2/2$ in
Example~\ref{ex:g2}, then \eqref{e:g5} becomes
\begin{equation}
\label{e:g7}
\vartheta_{A,B}(x)\geq
\dfrac{\bigl\|x-J_{\gamma A}(x-\gamma Bx)\bigr\|^2}{\gamma},
\end{equation}
which effectively splits $A$ and $B$ and is computable in terms of 
the standard resolvent $J_{\gamma A}$. 
\end{example}

\begin{example}
\label{ex:pd}
We consider a primal-dual composite problem discussed in
\cite{Joca16}. Let $\YY\neq\{0\}$ be a reflexive real Banach space,
let $\EEE$ be the standard product vector space $\XX\times\YY^*$ 
equipped with the norm $(x,y^*)\mapsto\sqrt{\|x\|^2+\|y^*\|^2}$, 
and let $\EEE^*$ be its topological dual, that is, 
$\XX^*\times\YY$ equipped with the norm 
$(x^*,y)\mapsto\sqrt{\|x^*\|^2+\|y\|^2}$.
Let $C\colon\XX\to 2^{\XX^*}$ and $D\colon\YY\to 2^{\YY^*}$ be
maximally monotone, and let $L\colon\XX\to\YY$ be linear and
bounded. Under consideration is the primal-dual system
\begin{equation}
\label{e:pd}
\text{find}\;\:(x,y^*)\in\EEE\;\:\text{such that}\;\:
\begin{cases}
0\in Cx+L^*\brk1{D(Lx)}\\
0\in-L\brk1{C^{-1}(-L^*y^*)}+D^{-1}y^*.
\end{cases}
\end{equation}
Let us introduce the operators
\begin{equation}
\label{e:31z}
\begin{cases}
\boldsymbol{A}\colon\EEE\to 2^{\EEE^*}\colon(x,y^*)\mapsto
Cx\,\times\,D^{-1}y^*\\
\boldsymbol{B}\colon\EEE\to\EEE^*\colon(x,y^*)\mapsto(L^*y^*,-Lx).
\end{cases}
\end{equation}
As shown in \cite[Section~2.1]{Joca16}, $\boldsymbol{A}$ and
$\boldsymbol{B}$ are maximally monotone and every point in 
the Kuhn--Tucker set $\zer(\boldsymbol{A}+\boldsymbol{B})$ solves 
\eqref{e:pd}. Let $\gamma\in\RPP$, $\DD_{\XX}\subset\XX$, and
$\DD_{\YY^*}\subset\YY^*$. Let
$W_{\XX}\colon\DD_{\XX}\to\XX^*$ and
$W_{\YY^*}\colon\DD_{\YY^*}\to\YY$ be such that
$W_{\XX}+\gamma C$ and 
$W_{\YY^*}+\gamma D^{-1}$ are surjective and injective. Further,
set $\DDD=\DD_{\XX}\times\DD_{\YY^*}$ and
$\boldsymbol{W}\colon\DDD\to\EEE^*\colon(x,y^*)
\mapsto(W_{\XX}x,W_{\YY^*}y^*)$. Then
$\boldsymbol{W}+\gamma\boldsymbol{A}$ is surjective and injective,
which confirms that properties \ref{p:3ii} of Proposition~\ref{p:3}
and \ref{t:1i} of Theorem~\ref{t:1} are satisfied. 
Additionally, we define
$\boldsymbol{K}=\boldsymbol{W}-\gamma\boldsymbol{B}
\colon\DDD\to\EEE^*\colon(x,y^*)
\mapsto(W_{\XX}x-\gamma L^*y^*,W_{\YY^*}y^*+\gamma Lx)$.
Now, let us fix $\boldsymbol{x}=(x,y^*)\in\DDD$. Then, as 
in \eqref{e:74}, how close $\boldsymbol{x}$ is to being a
Kuhn--Tucker point can be gauged by the value of the penalty
function $\vartheta(\boldsymbol{x})=
\vartheta_{\boldsymbol{A},\boldsymbol{B}}(\boldsymbol{x})$. 
Note that \eqref{e:alain} and Proposition~\ref{p:12} entail that 
\begin{align}
J_{\gamma(\boldsymbol{A}+\boldsymbol{B})}^{\boldsymbol{K}}
\boldsymbol{x}
&=\brk1{\boldsymbol{K}+\gamma(\boldsymbol{A}+\boldsymbol{B})}^{-1}
(\boldsymbol{K}\boldsymbol{x})\nonumber\\
&=\brk2{(W_{\XX}+\gamma C)^{-1}
(W_{\XX}x-\gamma L^*y^*),(W_{\YY^*}+\gamma D^{-1})^{-1}
(W_{\YY^*}y^*+\gamma Lx)}\nonumber\\
&=\brk2{J_{\gamma(C+L^*y^*)}^{W_{\XX}}x,
J_{\gamma(D^{-1}-Lx)}^{W_{\YY^*}}y^*}
\end{align}
is well defined. Therefore, \eqref{e:g1} applied to 
$(\boldsymbol{A},\boldsymbol{B},\boldsymbol{K})$ in $\EEE$ yields
\begin{align}
\label{e:fin}
\vartheta(\boldsymbol{x})
&\geq\dfrac{1}{\gamma}\biggl(
\pair2{x-J_{\gamma(C+L^*y^*)}^{W_{\XX}}x}
{W_{\XX}x-W_{\XX}\brk2{J_{\gamma(C+L^*y^*)}^{W_{\XX}}x}}
\nonumber\\
&\qquad\;\;+\pair2{y^*-J_{\gamma(D^{-1}-Lx)}^{W_{\YY^*}}y^*}
{W_{\YY^*}y^*-W_{\YY^*}\brk2{J_{\gamma(D^{-1}-Lx)}^{W_{\YY^*}}y^*}}
\biggr).
\end{align}
For example, suppose that $W_{\XX}$ is maximally monotone, 
that the cone generated by $\DD_{\XX}-\dom C$ is a closed vector 
subspace of $\XX$, and that $W_{\XX}$ is $\phi_{\XX}$-uniformly 
monotone with $\phi_{\XX}(t)/t\to\pinf$ as $t\to\pinf$. 
Likewise, suppose that $W_{\YY^*}$ is maximally monotone, that
the cone generated by $\DD_{\YY^*}-\ran D$ is a closed vector 
subspace of $\YY^*$, and that $W_{\YY^*}$ is
$\phi_{\YY^*}$-uniformly monotone with $\phi_{\YY^*}(t)/t\to\pinf$
as $t\to\pinf$. Then we deduce from \eqref{e:fin} and
Example~\ref{ex:24} applied in $\XX$ and in $\YY^*$, and from
Proposition~\ref{p:12} that
\begin{align}
\label{e:g9}
\vartheta(\boldsymbol{x})
&\geq\dfrac{\phi_{\XX}\brk2{\bigl\|
x-J_{\gamma(C+L^*y^*)}^{W_{\XX}}x\bigr\|}
+\phi_{\YY^*}\brk2{\bigl\|
y^*-J_{\gamma(D^{-1}-Lx)}^{W_{\YY^*}}y^*\bigr\|}}
{\gamma}\nonumber\\
&=\dfrac{\phi_{\XX}\brk2{
\bigl\|x-(W_{\XX}+\gamma C)^{-1}(W_{\XX}x-\gamma L^*y^*)\bigr\|}
+\phi_{\YY^*}\brk2{\bigl\|y^*-(W_{\YY^*}+\gamma D^{-1})^{-1}
(W_{\YY^*}y^*+\gamma Lx)\bigr\|}}{\gamma}.
\end{align}
In particular, if $\XX$ and $\YY$ are Hilbertian, 
$W_{\XX}=\Id_{\XX}$, and $W_{\YY^*}=\Id_{\YY^*}$, then all the 
above hypotheses are satisfied with 
$\phi_{\XX}=\phi_{\YY^*}=|\cdot|^2$ and we obtain, for 
$\boldsymbol{x}=(x,y^*)\in\XX\times\YY$,
\begin{equation}
\label{e:fz}
\vartheta(\boldsymbol{x})
\geq\dfrac{
\bigl\|x-J_{\gamma C}(x-\gamma L^*y^*)\bigr\|^2+
\bigl\|y^*-J_{\gamma D^{-1}}(y^*+\gamma Lx)\bigr\|^2}{\gamma}.
\end{equation}
\end{example}

We conclude with an application of Example~\ref{ex:pd} to
Fenchel--Rockafellar duality in optimization.

\begin{example}
\label{ex:56}
Define $\YY$ and $\EEE$ as in Example~\ref{ex:pd}, let
$\varphi\in\Gamma_0(\XX)$, let $\psi\in\Gamma_0(\YY)$,
and let $L\colon\XX\to\YY$ be linear and bounded. Consider the
primal problem
\begin{equation}
\label{e:p1}
\minimize{x\in\XX}{\varphi(x)+\psi(Lx)}
\end{equation}
and the dual problem
\begin{equation}
\label{e:d1}
\minimize{y^*\in\YY^*}{\varphi^*(-L^*y^*)+\psi^*(y^*)}.
\end{equation}
Let us set $C=\partial\varphi$ and $D=\partial\psi$ in 
Example~\ref{ex:pd}, and let us define $\boldsymbol{A}$ and
$\boldsymbol{B}$ as in \eqref{e:31z}. Then
every point in the Kuhn--Tucker set
$\zer(\boldsymbol{A}+\boldsymbol{B})$ solves the primal-dual pair
\eqref{e:p1}--\eqref{e:d1}.
Now let $\gamma\in\RPP$, and let $f\in\Gamma_0(\XX)$ and 
$g\in\Gamma_0(\YY)$ be Legendre functions such that 
$\nabla f+\gamma\partial\varphi$ and 
$\nabla g^*+\gamma\partial\psi^*$ are surjective, and set
$W_{\XX}=\nabla f$ and $W_{\YY^*}=\nabla g^*$. We recall that 
$g^*$ is a Legendre function \cite[Corollary~5.5]{Ccma01}
and note that, since $f$ and $g^*$ are strictly convex on the
convex sets $\DD_{\XX}=\intdom f$ and $\DD_{\YY^*}=\intdom g^*$,
respectively, $W_{\XX}$ and $W_{\YY^*}$ are strictly monotone 
\cite[Proposition~25.10]{Zei90B}. So are therefore the operators
$W_{\XX}+\gamma C$ and $W_{\YY^*}+\gamma D^{-1}$ which, as in
Theorem~\ref{t:2}\ref{t:2ii}, makes them injective. Next, we
introduce $\Prox_{\gamma\varphi}^f=(W_{\XX}+\gamma C)^{-1}=
(\nabla f+\gamma\partial\varphi)^{-1}$ and 
$\Prox_{\gamma\psi^*}^{g^*}=(W_{\YY^*}+\gamma D^{-1})^{-1}=
(\nabla g^*+\gamma\partial\psi^*)^{-1}$. Then, as in \eqref{e:g5},
we infer from \eqref{e:fin} that, for every 
$\boldsymbol{x}=(x,y^*)\in\intdom f\times\intdom g^*$, 
\begin{align}
\label{e:99}
\vartheta(\boldsymbol{x})
&\geq\dfrac{1}{\gamma}\biggl(
D_f\brk2{x,\Prox_{\gamma\varphi}^f\brk1{\nabla f(x)-\gamma L^*y^*}}
+D_f\brk2{\Prox_{\gamma\varphi}^f
\brk1{\nabla f(x)-\gamma L^*y^*},x}\:+\nonumber\\
&\hspace{10mm}
D_{g^*}\brk2{y^*,\Prox_{\gamma\psi^*}^{g^*}
\brk1{\nabla g^*(y^*)+\gamma Lx}}
+D_{g^*}\brk2{\Prox_{\gamma\psi^*}^{g^*}
\brk1{\nabla g^*(y^*)+\gamma Lx},y^*}\biggr).
\end{align}
In particular, if $\XX$ and $\YY$ are Hilbertian, 
$f=\|\cdot\|_{\XX}^2/2$, $g=\|\cdot\|_{\YY}^2/2$, 
and $\boldsymbol{x}=(x,y^*)\in\XX\times\YY$, we obtain
\begin{equation}
\label{e:98}
\vartheta(\boldsymbol{x})
\geq\dfrac{\bigl\|x-\prox_{\gamma\varphi}(x-\gamma L^*y^*)
\bigr\|^2+\bigl\|y^*-\prox_{\gamma\psi^*}(y^*+\gamma Lx)
\bigr\|^2}{\gamma},
\end{equation}
which can also be viewed as a special case of \eqref{e:fz}. Note
that, in the Hilbertian --- and in particular the Euclidean ---
setting, $\Prox_{\gamma\varphi}^{f}$ and
$\Prox_{\gamma\psi^*}^{g^*}$ may be easier to compute than
$\prox_{\gamma\varphi}$ and $\prox_{\gamma\psi^*}$; see
\cite{Baus17,Joca16,Nguy17} and Example~\ref{ex:4}. This makes the
lower bound of \eqref{e:99} more tractable than that of
\eqref{e:98} in such instances.
\end{example}

\section{Conclusions and future directions}
\label{sec:8}

We have derived new lower bounds on the Haraux function of
set-valued operators and on the Fenchel--Young function of proper
functions in the framework of reflexive Banach spaces. These bounds
were obtained by using two types of resolvents, namely metric
resolvents and warped resolvents. Existing results have been
recovered as special cases. Two directions for future work can be
mentioned here:
\begin{itemize}
\item
In Example~\ref{ex:pd}, we have applied our results to primal-dual
composite monotone inclusions of type \eqref{e:pd}. It appears that
this approach can be used for general systems of multivariate
inclusions involving more complex monotonicity-preserving
operations such as those considered in \cite{Sadd22} and
\cite[Section~10]{Acnu24}. Indeed, as discussed in these papers,
such primal-dual systems can be reformulated in ``saddle form'' and
thus be reduced to solving a monotone inclusion of the type
$\boldsymbol{\mathsf{0}}\in\boldsymbol{\mathsf{A}}
\boldsymbol{\mathsf{x}}+\boldsymbol{\mathsf{B}}
\boldsymbol{\mathsf{x}}$ in a suitably defined product space
$\boldsymbol{\mathsf{X}}$, where 
$\boldsymbol{\mathsf{A}}\colon\boldsymbol{\mathsf{X}}\to 
2^{\boldsymbol{\mathsf{X}}^*}$ is maximally monotone and
$\boldsymbol{\mathsf{B}}\colon\boldsymbol{\mathsf{X}}\to
\boldsymbol{\mathsf{X}^*}$ is cocoercive. 
We can then exploit \eqref{e:g1} in this scenario.
\item
A question of interest is whether lower bounds on the Haraux
function can be derived in the context of nonreflexive Banach 
spaces with maximally monotone operators of type (NI) 
\cite[Section~36]{Simo08}. For instance, the lower bound
\eqref{e:14} is known to hold in this context
\cite[Theorem~2.6]{Vois09} and, in the light of
\cite[Remark~2.4]{Vois09}, it is reasonable to expect that the
results of Sections~\ref{sec:3} and \ref{sec:4} remain valid as
well.
\end{itemize}

\end{document}